\documentclass[11pt]{article}
 \usepackage{amsthm,mathrsfs,amsmath,amssymb,enumerate,color}
 \usepackage{hyperref}
 \usepackage{dsfont}
 \usepackage{comment}
 \usepackage{cite}
 \usepackage[T1]{fontenc}
 \usepackage[utf8]{inputenc}
 \usepackage{authblk}
 \usepackage{indentfirst}
 \usepackage{pgfplots}
 \pgfplotsset{compat=1.18}
 \numberwithin{equation}{section}
\newtheorem{theorem}{Theorem}[section]
\newtheorem{Proposition}[theorem]{Proposition}
\newtheorem{lemma}[theorem]{Lemma}

\theoremstyle{definition}

\newcommand{\RT}{{\mathbb{R}^3}}

\date{}
\title{\bf Multiple positive solutions of a quasilinear Schr\"{o}dinger-Poisson system with concave and convex nonlinearities}

 \author{{Lanxin Huang\footnote{Corresponding author. E-mail address: 812419761@qq.com.} \ \ \ \ \ Xinqi Zhou}\\
 {\small Department of Mathematics,  Kunming University of Science and Technology}\\
 {\small  Kunming 650500, China}}

\begin{document}
\maketitle

\setcounter{page}{1}\baselineskip 15pt

\begin{quote}{\small
{\bf Abstract } \  In this paper, we consider the   quasilinear Schr\"{o}dinger-Poisson system with concave and convex nonlinearities
  \begin{align*}
  \begin{cases}
  -\Delta_{p} u+\lambda V(x)|u|^{p-2}u + \mu \phi |u|^{p-2}u= a(x)|u|^{m-2}u + b(x)|u|^{q-2}u  & \ \ \ \mathrm{in}\ \mathbb{R}^{3},\\
  -\Delta \phi=|u|^{p}  &\ \ \ \mathrm{in}\ \mathbb{R}^{3},
  \end{cases}
 \end{align*}
 where $\lambda >0, ~\mu >0$, $\frac{3}{2}<p<3$, $1< q<p < m < 2p$
 and  $\Delta_{p} u= \hbox{div}(|\nabla u|^{p-2}\nabla u)$. We assume that $V(x) \in C(\mathbb{R}^{3}, \mathbb{R})$ is a steep potential well, while $a(x)$ and $b(x)$ are allowed to be sign-changing and satisfy some suitable assumptions in $\mathbb{R}^3$.
By using the Ekeland's variational principle and combining the constraint approach, we prove that the system admits two positive solutions. \\
{\bf Keywords} \ Quasilinear Schr\"{o}dinger-Poisson system; Variational methods; Concave and convex nonlinearities; Positive solution.
\\   {\bf 2020 Mathematics Subject Classification} \ Primary:  35J10; 35J50; 35J60; 35J92.
}
\end{quote}

\section{Introduction and main results}

In the past two decades, the classical semilinear   Schr\"{o}dinger-Poisson system
  \begin{align}\label{101}
  \begin{cases}
  -\Delta u+ V(x)u+\mu \phi u=f(x,u) &\ \ \ \mathrm{in}\ \mathbb{R}^{3},\\
  -\Delta \phi= u^2 &\ \ \ \mathrm{in}\ \mathbb{R}^{3},
  \end{cases}
  \end{align}
has been studied widely under variant assumptions on  the potential $V(x)$ and the nonlinearity $f(x,u)$.
  As originally proposed  by Benci and Fortunato in  \cite{1998BF,2002BF},  it models the interaction of a charged particle with an electrostatic field in quantum mechanics.
  For $V(x)=1$ and $f(x,u)=|u|^{m-2}u$, the general existence, multiplicity and non-existence results were established for $m\in(2,6)$, see \cite{2004DM-PRSEA,2004DM-ANS,2002d-ANS,2006Ruiz,2009YSD-NA}.
In \cite{2011JZ-JDE}, Jiang and Zhou considered  the  Schr\"{o}dinger-Poisson system
\begin{equation}\label{eqjz}
\begin{cases}
 -\Delta u+ (1+\lambda V(x))u+l(x) \phi u=|u|^{m-2}u &\ \ \ \mathrm{in}\ \mathbb{R}^{3},\\
  -\Delta \phi= u^2 &\ \ \ \mathrm{in}\ \mathbb{R}^{3},
\end{cases}
\end{equation}
where $\lambda>0$, $l(x)=\mu>0$, the potential $V(x)$ satisfies the following conditions,
 \begin{itemize}
 \item[$\mathrm{(H_{l})}$]  $V \in C(\mathbb{R}^3,\mathbb{R})$ with $V(x) \geqslant 0$ in $\mathbb{R}^3$ and there exists $c_0 > 0$ such that the set $\{V < c_0\} := \{x \in \mathbb{R}^3 \mid V(x) < c_0\}$ has finite positive Lebesgue measure;
 \item[$\mathrm{(H_{2})}$]   $\Omega = \text{int}\{x \in \mathbb{R}^3 \mid V(x) = 0\}$ is nonempty and has smooth boundary with $\overline{\Omega} = V^{-1}(0)$.
 \end{itemize}
They obtained the existence and concentration results for $m\in(2,3)\cup[4,6)$ by combining domains approximation with priori estimates.
This kind of hypotheses was first introduced by Bartsch and Wang \cite{1995BW-CPDE} in the study of a nonlinear
Schr\"{o}dinger equation and the potential $\lambda V(x)$ with $V$ satisfying $(H_{1})$--$(H_{2})$ is referred as the steep well potential.
Under the same conditions on $V(x)$, Zhao, Liu and Zhao \cite{2013ZLZ-JDE} studied  system (\ref{eqjz})  with  $l(x)\geqslant 0$ for $x\in \mathbb{R}^3$, and  obtained the existence and concentration results for $m\in(3,6)$ via variational methods. In particular, the potential $V(x)$ is allowed to be sign-changing for the case $m\in(4,6)$.
For related works of system (\ref{eqjz})   or similar problem including  steep well potential, we refer readers to \cite{2023PJ-ZAMP,2023KLT-JGA,2020SW-NA,2020YTW-AMC,2021SW-CCM}.

We now turn to the case of   system  (\ref{eqjz}) with   concave and convex nonlinearities. In \cite{2015SSZ-DCDS}, Sun, Su  and Zhao  considered the system
 \begin{eqnarray}
 \left\{
 \begin{array}{ll}
 \displaystyle  -\Delta u+\lambda V(x)u+\mu \phi u=k(x)|u|^{m-2}u+ h(x)|u|^{q-2}u\ \ \  &\mathrm{in} \ \mathbb{R}^{3},\\
 \displaystyle -\Delta \phi=u^2\ \ \  &\mathrm{in} \ \mathbb{R}^{3},
 \end{array}\label{103}
 \right. \end{eqnarray}
where $1< q < 2<m<3$, $\lambda =\mu=1$, and the nonnegative functions $V,~k,~h$ are  radial.
Following Ruiz's ideas in \cite{2006Ruiz}, they used the decay properties of functions in the radial symmetric space  to prove that the functional satisfies the Palais-Smale condition and is bounded from below.
Applying the variant version of Clark's theorem, it was proved in \cite{2015SSZ-DCDS}  that system {\rm(\ref{103})} admits infinitely many solutions with negative energy.
Subsequently, Sun and Wu in \cite{2021SW-CCM} investigated system {\rm(\ref{103})} for the wider range $2<m<4$ with a steep potential well $V(x)$ satisfying $(H_{1})$--$(H_{2})$.
Under suitable conditions on the parameters and functions, they obtained two positive solutions by using the Ekeland's variational principle
 and introducing the filtration of the Nehari manifold on the outside of a small ball.
Moreover,  their work removed the radial symmetry condition required in \cite{2015SSZ-DCDS}, and further addressed the more general case where the function $k(x)$ and $h(x)$ are allowed to be sign-changing. Furthermore, related works have also focused on sign-changing solutions. Specifically, Yang and Ou  in \cite{2021YO-JMAA}  obtained a sign-changing solution with positive energy of system  {\rm(\ref{103})} in a bounded domain for the case $\lambda=0$, $1< q < 2$ and  $4<m<6$, while Yang and Tang in \cite{2023YT-CAM} extended this result to the whole space $\mathbb{R}^{3}$ with $V(x)$ satisfies the condition
 \begin{itemize}
 \item[$\mathrm{(V)}$]  $V \in C(\mathbb{R}^3,\mathbb{R})$,  $\inf_{\mathbb{R}^3} V(x) \geqslant a >0$ in $\mathbb{R}^3$, and for any $c_0 > 0$, the set $\{x \in \mathbb{R}^3 \mid V(x) \leqslant c_0\}$ has finite positive Lebesgue measure.
 \end{itemize}

In a more general setting,  Shao and Mao in \cite{2018SM-AML,2019SM-JMP} studied the Schr\"{o}dinger-Poisson system with combined nonlinearities,
 \begin{eqnarray}
 \left\{
 \begin{array}{ll}
 \displaystyle  -\Delta u+ V(x)u+\phi u=\sigma g(x,u)+ \tau f(x,u)\ \ \  &\mathrm{in} \ \mathbb{R}^{3},\\
 \displaystyle -\Delta \phi=u^2\ \ \  &\mathrm{in} \ \mathbb{R}^{3},
 \end{array}\label{104}
 \right. \end{eqnarray}
where $\sigma$ and $\tau$ are parameters, $V(x)$ satisfies the above assumption $(V)$, the function $g$ is a concave term while  $f$ is a
convex term satisfying the following well-known (AR) condition
 \begin{itemize}
\item[$\mathrm{(A)}$]  there exists constants $\theta>4$,  such that for all   $x\in\mathbb{R}^3$ and  $u\in\mathbb{R}\setminus\{0\}$,
 $$0<\theta F(x,u)=\theta \int_{0}^{u}f(x,t)dt \leqslant uf(x,u).$$
 \end{itemize}
Applying the fountain theorem, they established the existence of infinitely many
solutions with high energy or small negative energy for system {\rm(\ref{104})}.
For related works of the system {\rm(\ref{104})} with concave and convex
nonlinearities, 
 we refer to \cite{2021-CL-AML,2017LS-CMA} and the references therein.


Recently, Du, Su and Wang \cite{2022DSW-JMAA,2022DSW-CPAA} first investigated the  quasilinear  Schr\"{o}dinger-Poisson  system
 \begin{eqnarray}
 \left\{
 \begin{array}{ll}
 \displaystyle -\Delta_{p} u+|u|^{p-2}u+\mu\phi |u|^{p-2}u=g(x,u)\ \ \  &\mathrm{in} \ \mathbb{R}^{3},\\
 \displaystyle -\Delta \phi=|u|^{p}\ \ \  &\mathrm{in} \ \mathbb{R}^{3},
 \end{array}\label{106}
 \right. \end{eqnarray}
where   $\mu >0$, $1<p<3$ and  $\Delta_{p} u= \hbox{div}(|\nabla u|^{p-2}\nabla u)$. In  \cite{2022DSW-JMAA}, they considered the subcritical case for $g(x,u)=|u|^{m-2}u$ with $p<m<p^{*}=\frac{3p}{3-p}$, and established the existence of nontrivial solutions of system {\rm(\ref{106})} for sufficiently small $\mu >0$.
In fact, they verified that the associated energy functional satisfies the mountain pass geometric structure by subtle scaling transformation, and obtained a Palais-Smale sequence at the mountain pass level. To further derive the boundedness of such Palais-Smale sequences, they adopted two distinct approaches according to different ranges of exponent  $m$. Precisely, for $\frac{2p(p+1)}{p+2} < m < 2p$, they employed the scaling technique by introducing an auxiliary functional; while for   $p < m \leqslant \frac{2p(p+1)}{p+2}$, they resorted to the cut-off method and constructed a truncated functional. When $\frac{3}{2}<p<3$ ($2p<p^{*}$),   Du, Su and Wang  \cite{2022DSW-CPAA} also considered the critical case for $g(x,u)=|u|^{p^{*}-2}u+\tau |u|^{m-2}u$ with $p<m<p^{*}$ and  $\tau >0$.  For sufficiently small $\mu >0$ and sufficiently large $\tau >0$, they established the existence of nontrivial solutions of system {\rm(\ref{106})}. Subsequently, further work on system {\rm(\ref{106})} concerning the normalized solutions, sign-changing solutions, multiple solutions, infinitely many solutions with varied nonlinear hypotheses, see for example \cite{2025XNL-ZAMP,2025LHR-JDE,2023CT-AML,2024HS-JMP}  and the references therein.

Nevertheless, few results have been established for  quasilinear Schr\"{o}dinger-Poisson systems with concave and convex nonlinearities including  the steep well potential. Motivated by the above observations, we investigate the following system
 \begin{align}\label{107}
  \begin{cases}
   -\Delta_{p} u+\lambda V(x)|u|^{p-2}u + \mu \phi |u|^{p-2}u= a(x)|u|^{m-2}u + b(x)|u|^{q-2}u  & \ \ \ \mathrm{in}\ \mathbb{R}^{3},\\
  -\Delta \phi=|u|^{p}  &\ \ \ \mathrm{in}\ \mathbb{R}^{3},
  \end{cases}
 \end{align}
 where $\lambda>0, ~\mu >0$, $\frac{3}{2}<p<3$,   $1< q<p < m < 2p$. The functions $a(x)$ and $b(x)$ satisfy the following conditions:
 \begin{itemize}
 \item[$\mathrm{(H_{3})}$]  $a \in L^\infty(\mathbb{R}^3)$ and $a(x) > 0$ in $\Omega$;
  \item[$\mathrm{(H_{4})}$]  $b \in L^{q_*}(\mathbb{R}^3)$ and $b^+ := \max \{b(x),0\} \not\equiv 0$, where $q_* = \frac{m}{m-q}$.
 \end{itemize}

Before stating our main result, we need to introduce some notations. Let $L^s(\mathbb{R}^3)$ denotes the Lebesgue space with the usual norm
  $|u|_{s}=\left(\int_{\mathbb{R}^3}|u|^{s}dx\right)^{\frac{1}{s}}$ for $1 \leqslant s<\infty$.    Let $D^{1,p}(\mathbb{R}^3)$ be the completion of $C_0^\infty(\mathbb{R}^3)$ with respect to the norm $\|u\|_{D^{1,p}}=\left(\int_{\mathbb{R}^3}|\nabla u|^pdx\right)^{\frac{1}{p}}.$   Denote by $\bar{S}_p$  the best constants for the embedding of $D^{1,p}(\mathbb{R}^3)$ in $L^{p^{*}}(\mathbb{R}^3)$.   
Let $W^{1,p}(\mathbb{R}^3)$ denotes the Sobolev space endowed with the  norm
 $\|u\|=\left(\int_{\mathbb{R}^3}|\nabla u|^p+|u|^pdx\right)^{\frac{1}{p}}.$
Let
$$
W_\lambda= \left\{ u \in W^{1,p}(\mathbb{R}^3) \mid \int_{\mathbb{R}^3} \lambda V(x)|u|^p \, dx < \infty \right\}
$$
be equipped with the  norm
\begin{equation}\label{2.1.1}
 \|u\|_\lambda = \left(\int_{\mathbb{R}^3}(|\nabla u|^p+\lambda  V(x)|u|^p)dx\right)^{\frac{1}{p}}.
\end{equation}
By virtue of the definition in \eqref{2.1.1},  it follows immediately that $\|u\| \leqslant \|u\|_\lambda$ for all $\lambda \geqslant 1$. 
 Denote by $S_{m,\Omega}$ be the best Sobolev constant for the embedding of $W_0^{1,p}(\Omega)$ in $L^m(\Omega)$ with $p < m< 2p$.  Denote by $\bar{S}_{p,\Omega}$  the best constants for the embedding of $D^{1,p}(\Omega)$ in $L^{p^{*}}(\Omega)$. 
 Denote  $|\cdot|$  the Lebesgue measure.
Set
\begin{equation}\label{1.1.1}
\mathbf{S}_m = \bar{S}_{p} |\{V < c_0\}|^{-\frac{p^{*}-m}{mp^{*}}}.
\end{equation}

 Now, we state our main results of this paper.

\begin{theorem}\label{thm1}
Let $\frac{3}{2}<p<3$ and $1< q<p < m < 2p$.  Assume that $(H_{1})$--$(H_{4})$ hold. Then there exists $\Pi_0 > 0$ such that for any $0 < \mu + |b|_{q_*} < \Pi_0$, system  \eqref{107} possesses  at least two positive solutions
 $(u_{\lambda,\mu}^\pm, \phi_{u_{\lambda,\mu}^\pm}) \in W_\lambda \times D^{1,2}(\mathbb{R}^3)$  satisfying
\[
\|u_{\lambda,\mu}^+\|_\lambda < \left( \frac{(p - q)\mathbf{S}_m^m}{(m - q)\|a\|_\infty} \right)^{\dfrac{1}{m-p}} < \|u_{\lambda,\mu}^-\|_\lambda 
\]
and
\[
\mathcal{J}_{\lambda,\mu}(u_{\lambda,\mu}^+) < 0 < \mathcal{J}_{\lambda,\mu}(u_{\lambda,\mu}^-)
\]
when  $\lambda > 0$ is sufficiently large, where $\mathcal{J}_{\lambda,\mu}$ is the corresponding energy functional of system \eqref{107} given by Section \ref{sec2}.
\end{theorem}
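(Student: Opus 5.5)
We argue variationally on $W_\lambda$, working with the reduced functional
\[
\mathcal{J}_{\lambda,\mu}(u)=\frac1p\|u\|_\lambda^p+\frac{\mu}{2p}\int_{\RT}\phi_u|u|^p\,dx-\frac1m\int_{\RT}a(x)|u|^m\,dx-\frac1q\int_{\RT}b(x)|u|^q\,dx,
\]
where $\phi_u\in D^{1,2}(\RT)$ solves $-\Delta\phi=|u|^p$. This $\phi_u$ is well defined because $\frac32<p<3$ gives $|u|^p\in L^{6/5}$ via $6p/5\in[p,p^*]$, and $\int\phi_u|u|^p\le C\|u\|^{2p}$. To get positive solutions we replace $|u|^{m-2}u,|u|^{q-2}u$ by their versions on $u^+$ (or apply the argument to $|u|$ and use the strong maximum principle of V\'azquez for the $p$-Laplacian at the end).

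The key embedding estimate uses $(H_1)$. For $\lambda\ge\lambda_0$ and $u\in W_\lambda$, Hölder on $\{V<c_0\}$ combined with $\lambda V\ge\lambda c_0$ off this set gives
\[
\Big(\int_{\RT}|u|^m\,dx\Big)^{p/m}\le \mathbf S_m^{-p}\|u\|_\lambda^p .
\]
This follows by interpolating between $L^p$ and $L^{p^*}$ and using \eqref{1.1.1}. Hence $\int a|u|^m\le\|a\|_\infty\mathbf S_m^{-m}\|u\|_\lambda^m$ and $\int b|u|^q\le|b|_{q_*}\mathbf S_m^{-q}\|u\|_\lambda^q$.

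Next we introduce the Nehari set $\mathbf N_{\lambda,\mu}=\{u\ne0:\langle\mathcal J'(u),u\rangle=0\}$ and study the fibering maps $t\mapsto\mathcal J(tu)$. Nonnegative terms of homogeneity $q<p<m<2p$ appear together with the nonlocal term of degree $2p$. We therefore compare with $h(t)=t^{p-q}\|u\|^p_\lambda-\|a\|_\infty\mathbf S_m^{-m}t^{m-q}\|u\|_\lambda^m$, whose maximum sits at radius
\[
\Big(\frac{(p-q)\mathbf S_m^m}{(m-q)\|a\|_\infty}\Big)^{1/(m-p)}=:R_0 .
\]
When $\mu+|b|_{q_*}$ is small, inside the ball $B_{R_0}$ the functional is coercive-ish and bounded below, and $\inf_{B_{R_0}}\mathcal J<0$. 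To see the negativity, take $\varphi\in C_0^\infty$ with $\int b^+\varphi^q>0$ and use small $t$; because $b^+\not\equiv0$ we may choose the support where $b>0$. On the sphere $\partial B_{R_0}$ (or slightly inside) we get $\mathcal J\ge\delta>0$.

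With this in place we proceed in three steps. First, Ekeland's principle on $\overline{B}_{\rho}$, $\rho<R_0$, yields a (PS) sequence at the level $\alpha^+<0$. Second, using the splitting $\mathbf N=\mathbf N^+\cup\mathbf N^-$ with $\mathbf N^0=\emptyset$ for small $\mu+|b|_{q_*}$ (shown via the same $h$ estimate), we minimize on $\mathbf N^-\cap\{\|u\|_\lambda>R_0\}$ and get a (PS) sequence at the level $\alpha^->0$. Here $a>0$ on $\Omega$ (from $(H_3)$) is used to show $\mathbf N^-\ne\emptyset$ by testing with $W_0^{1,p}(\Omega)$ functions. To make $\alpha^-$ bounded above uniformly in $\lambda$, and thus keep the (PS) level below the compactness threshold, we bound $\alpha^-$ through $S_{m,\Omega}$. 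Third, boundedness of (PS) sequences follows because $m<2p$ forces a Nehari-type combination $\mathcal J-\frac1m\langle\mathcal J',u\rangle$. This is the delicate point: the $\mu$-term gets coefficient $\frac1{2p}-\frac1m<0$. We therefore bound it by $C\mu\|u\|^{2p}$, and smallness of $\mu$ together with the a priori restriction $\|u\|$ near the relevant constraint region keeps it controlled. Alternatively, we work directly with minimizing sequences on $\mathbf N^\pm$, which are bounded by construction.

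The main obstacle is compactness in $\RT$. We take $u_n\rightharpoonup u$ in $W_\lambda$ and set $v_n=u_n-u$. The steep well gives $\int_{\{V\ge c_0\}}|v_n|^p\le\frac{1}{\lambda c_0}\|v_n\|_\lambda^p\to$ small, while $|\{V<c_0\}|<\infty$ gives local compactness. The $b$ term converges by $b\in L^{q_*}$, and the Poisson term is handled by a Brezis–Lieb splitting of $\int\phi_u|u|^p$. We then show that for $\lambda$ large any (PS) sequence with levels bounded by a $\lambda$-independent constant $D$ has $\|v_n\|_\lambda\to0$. If not, then $\|v_n\|_\lambda^p\le\|a\|_\infty\int|v_n|^m+o(1)$, and the above estimate with the smallness of $\lambda^{-1}$ forces either $v_n\to0$ or a lower bound contradicting the level bound. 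This requires also $\nabla u_n\to\nabla u$ a.e. (Boccardo–Murat) to pass to the limit in $\Delta_p$. Finally, the limits satisfy $\|u^+\|_\lambda<R_0<\|u^-\|_\lambda$ and $\mathcal J(u^+)<0<\mathcal J(u^-)$, so they are distinct. Nonnegativity and the maximum principle give positivity, with $\phi_{u^\pm}\in D^{1,2}$.
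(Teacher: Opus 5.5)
The gap is in your Step 2. The claim that $N^0_{\lambda,\mu}=\emptyset$ once $\mu+|b|_{q_*}$ is small is false for this system, and your minimization over all of $N^-_{\lambda,\mu}$ rests on it. The Tarantello/Brown--Zhang argument combines two facts. First, \eqref{4.1} gives $\|u\|_\lambda\geqslant\rho_0$ on $N^0_{\lambda,\mu}$. Second, \eqref{4.2}, without the Poisson term, would give $\|u\|_\lambda^{p-q}\leqslant\frac{m-q}{m-p}\mathbf{S}_m^{-q}|b|_{q_*}$. Here \eqref{4.2} also contains $(2p-m)\mu\int_{\mathbb{R}^3}\phi_u|u|^p\,dx$, which is of order $\mu\|u\|_\lambda^{2p}$. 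So for small $|b|_{q_*}$ you only learn $\|u\|_\lambda^p\gtrsim\mu^{-1}$ on $N^0_{\lambda,\mu}$, which is no contradiction.

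Degenerate points really occur. Write $K'_{\lambda,u}(t)=t^{q-1}\bigl(\chi_u(t)-\int_{\mathbb{R}^3}b|u|^q\,dx\bigr)$ with $\chi_u(t)=t^{p-q}\|u\|_\lambda^p+\mu t^{2p-q}\int_{\mathbb{R}^3}\phi_u|u|^p\,dx-t^{m-q}\int_{\mathbb{R}^3}a|u|^m\,dx$. Since $m<2p$, $\chi_u$ can have an interior local minimum, where its value is $\frac{t^{p-q}}{m-q}\bigl[(m-p)\|u\|_\lambda^p-(2p-m)\mu t^p\int_{\mathbb{R}^3}\phi_u|u|^p\,dx\bigr]$. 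At the degenerate critical point where this minimum is born, the value equals $\frac{p(m-p)}{(m-q)(2p-q)}t^{p-q}\|u\|_\lambda^p>0$. For $u=w_\Omega$ and $\mu$ small, it is very negative. Move $u$ continuously, e.g.\ from $w_\Omega$ toward a rapidly oscillating function in $W_0^{1,p}(\Omega)$, for which $\chi_u$ is monotone. The value then meets $\int_{\mathbb{R}^3}b|u|^q\,dx$, and the corresponding $tu$ lies in $N^0_{\lambda,\mu}$.

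So a minimizing sequence in $N^-_{\lambda,\mu}$ may a priori drift toward $N^0_{\lambda,\mu}$, where $\Phi(u)=K''_{\lambda,u}(1)$ vanishes. Then Lemma \ref{lem4.1} does not yield a (PS) sequence. Neither does the implicit-function step of Lemma \ref{lem5.1}, which needs $|\Phi(u_n)|$ bounded away from $0$ to control $(t_n^*)'(0)$. Your phrase ``bounded by construction'' hides the same issue. The Poisson term, with coefficient $\frac{1}{2p}-\frac{1}{m}<0$, is absorbed only through $K''_{\lambda,u}(1)<0$. That is what gives $\mathcal{J}_{\lambda,\mu}(u)\geqslant\frac{m-p}{2pm}\|u\|_\lambda^p-C|b|_{q_*}\|u\|_\lambda^q$ on $N^-_{\lambda,\mu}$, as in Lemma \ref{lem4.2}.

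What is missing is the paper's filtration. Take the ceiling $c=\frac{m-p}{pm}M(p,q,m)$, which is independent of $\lambda$ and $\mu$ and exceeds $\mathcal{J}_{\lambda,\mu}(t_\mu^-w_\Omega)$ by Lemma \ref{lem4.4}. Let $u\in N_{\lambda,\mu}$ with $\|u\|_\lambda>\rho_0$ and $\mathcal{J}_{\lambda,\mu}(u)<c$. Under \eqref{4.5.1}, Proposition \ref{pro2.1}(ii) and \eqref{2.1.2} turn this into $\frac{m-p}{pm}\|u\|_\lambda^p-\Lambda\|u\|_\lambda^{2p}<c$ with $\Lambda$ small. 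Hence either $\|u\|_\lambda<\widehat{D}_1$ or $\|u\|_\lambda>\widehat{D}_2$.

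On the low-norm piece $N^{(1)}_{\lambda,\mu}$, \eqref{4.7} gives $K''_{\lambda,u}(1)\leqslant-\varepsilon\|u\|_\lambda^p$ for some $\varepsilon>0$. So this piece sits inside $N^-_{\lambda,\mu}$ uniformly away from $N^0_{\lambda,\mu}$, which lives at norms $\gtrsim\mu^{-1/p}$. Ekeland on $N^{(1)}_{\lambda,\mu}$, together with the implicit function theorem, then yields a (PS) sequence bounded by $\widehat{D}_1$ independently of $\lambda$. Your steep-well compactness argument finishes from there. With Step 2 replaced in this way, the rest of your plan matches the paper: Ekeland in $\overline{B}_{\rho_0}$, the test function $t_\mu^-w_\Omega$, and Lemma \ref{lem2.3}.

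A smaller point: V\'azquez's strong maximum principle is unavailable where $b<0$. The absorption $b^-(x)s^{q-1}$ with $q<p$ violates his condition $\int_{0^+}(s\beta(s))^{-1/p}\,ds=\infty$, and $b$ is only in $L^{q_*}$. You obtain only $u\geqslant0$, $u\not\equiv0$, which is also all the paper's replacement of $u$ by $|u|$ gives.
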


We now  outline our
strategy for establishing the existence of multiple positive  solutions to  system \eqref{107} and highlight the main difficulties addressed in this work.
On the one hand, influenced by the concave term $b(x)|u|^{q-2}u$, with the aid of Ekeland's variational principle, we  can seek a critical point with negative energy in a closed ball 
 $\overline{\mathrm{B}}_{\rho_0}:= \{u \in W_\lambda \mid \|u\|_\lambda \leqslant \rho_0\}$.
On the other hand, to derive a critical point with positive energy,  we adapt the methods of \cite{2021SW-CCM} to
introduce a filtration of the Nehari manifold outside this ball  as follows
\begin{equation}
N_{\lambda,\mu} [c] = \left\{ u \in N_{\lambda,\mu} \mid \|u\|_\lambda > \rho_0 \text{ and } J_{\lambda,\mu}(u) <c \right\},  
\end{equation}
 for some $c > 0$, where $N_{\lambda,\mu}$ is the Nehari manifold.
 In fact,  if we can further show  that the functional is bounded below  on the bounded part of the filtered set $N_{\lambda,\mu} [c]$, then we proceed to minimize the functional over  this bounded part.  Through this procedure, two distinct critical points of the functional can be obtained.

 
A principal difficulty arises in constructing a  proper  filtered set $N_{\lambda,\mu} [c]$. 
To overcome this, we need to impose suitable assumptions on  $\mu$ and $b(x)$ to seek
appropriate values for the radius $\rho_0$  and the upper bound $c$. 
Another key challenge lies in proving the non-emptiness of the bounded subset of the filtered set  $N_{\lambda,\mu} [c]$, which is  essential for  applying the minimization argument.



 The paper is organized as follows.   The variational framework of system \eqref{107} and some preliminary knowledge are given in Section \ref{sec2}.
 In Section  \ref{sec3}, we establish the existence of a local minimum for $J_{\lambda,\mu}$. In Section  \ref{sec4}, we propose the filtration of  Nehari manifold. In Section  \ref{sec5}, we prove Theorem  \ref{thm1}.

\section{Preliminaries}\label{sec2}
In this section, we give some useful preliminaries.
Adapting the approach developed in  \cite{2020SW-NA}, one has
$$
\int_{\mathbb{R}^3}(|\nabla u|^p+|u|^p) \, dx \leqslant \left(1 + \bar{S}_{p}^{-p}|\{V < c_0\}|^{\frac{p^{*}-p}{p^{*}}}\right) \|u\|_\lambda^p
$$
for all $\lambda$ satisfying
$$\lambda \geqslant c_0^{-1}\left(1 + \bar{S}_{p}^{-p}|\{V < c_0\}|^{\frac{p^{*}-p}{p^{*}}}\right)^{-1}.$$
This implies that the embedding $W_\lambda \hookrightarrow W^{1,p}(\mathbb{R}^3)$ is continuous and for $p <m < 2p$, we obtain
\begin{equation}\label{2.1.2}
\int_{\mathbb{R}^3} |u|^m \, dx  \leqslant   \mathbf{S}_{m} ^{-m} \|u\|_\lambda^m,  \quad \quad \forall  ~\lambda \geqslant \lambda_0 :=  c_0^{-1} \bar{S}_{p}^p |\{V < c_0\}|^{-\frac{p^{*}-p}{p^{*}}}. \end{equation}

For every $u \in W_\lambda$, the linear functional $\mathcal{T}_u : D^{1,2}(\mathbb{R}^3) \to \mathbb{R}$ is defined as
$$
\mathcal{T}_u(v) = \int_{\mathbb{R}^3} |u|^p v \, dx.
$$
By the H\"{o}lder inequality, one concludes
\begin{equation*}
|\mathcal{T}_u(v)| \leqslant \left( \int_{\mathbb{R}^3} |u|^{\frac{6p}{5}} dx \right)^{\frac{5}{6}} \left( \int_{\mathbb{R}^3} |v|^6 dx \right)^{\frac{1}{6}} \leqslant \bar{S}_2^{-1} \mathbf{S}_{\frac{6p}{5}} ^{-p} \|u\|_{\lambda}^p \|v\|_{D^{1,2}}.
\end{equation*}
Then, it follows that $\mathcal{T}_u$ is continuous on $D^{1,2}(\mathbb{R}^3)$. By the Lax-Milgram theorem, we know that there exists a unique $\phi_u \in D^{1,2}(\mathbb{R}^3)$ such that
$$
-\Delta \phi_u = |u|^p \quad \text{in } \mathbb{R}^3.
$$
According to \cite[Theorem 6.21]{LL}, $\phi_u$ has the following explicit expression
\begin{equation*}
\phi_u(x) = \frac{1}{4\pi} \int_{\mathbb{R}^3} \frac{|u(y)|^p}{|x-y|} dy \geqslant 0.
\end{equation*}
Moreover, $\phi_u$ has the following properties.

\begin{Proposition}[Proposition 2.1, \cite{2022DSW-JMAA}] \label{pro2.1}  For any  $u \in W_\lambda$, one has

$\mathrm{(i)}$ \ \ for all $t>0$, $\phi_{tu}=t^p \phi_u$;

$\mathrm{(ii)}$ \ \ $\|\phi_{u}\|_{D^{1,2}} \leqslant    \bar{S}_2^{-1} \mathbf{S}_{\frac{6p}{5}} ^{-p} \|u\|_{\lambda}^p $;

$\mathrm{(iii)}$ \ if $u_{n}\rightharpoonup u$ in $W_\lambda$, then $\phi_{u_{n}}\rightharpoonup \phi_{u}$ in $D^{1,2}(\mathbb{R}^3)$ and
\begin{equation*}
\int_{\mathbb{R}^3}\phi_{u_n} |u_n|^{p-2}u_n\varphi dx\rightarrow \int_{\mathbb{R}^3}\phi_u|u|^{p-2}u\varphi dx, \ \ \ \ \forall \ \varphi\in W_\lambda.
\end{equation*}
\end{Proposition}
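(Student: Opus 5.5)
The proposition has three parts, which I would handle in order: (i) by uniqueness, (ii) by testing the equation with $\phi_u$ itself, and (iii) through weak compactness plus local strong convergence of $u_n$.

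For (i), note that $\phi_{tu}$ and $t^p\phi_u$ both lie in $D^{1,2}(\mathbb{R}^3)$ and both solve $-\Delta\phi=|tu|^p=t^p|u|^p$ in the weak sense. Lax--Milgram gives uniqueness, so they coincide. Equivalently, one can read this off the Newtonian potential formula $\phi_u(x)=\frac{1}{4\pi}\int|u(y)|^p|x-y|^{-1}dy$.

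For (ii), I would test the weak equation $\int\nabla\phi_u\cdot\nabla v=\mathcal{T}_u(v)$ with $v=\phi_u$. This gives
\[
\|\phi_u\|_{D^{1,2}}^2=\mathcal{T}_u(\phi_u)\leqslant \bar S_2^{-1}\mathbf{S}_{\frac{6p}{5}}^{-p}\|u\|_\lambda^p\|\phi_u\|_{D^{1,2}},
\]
where the inequality is the Hölder/Sobolev bound for $\mathcal{T}_u$ established just above. Dividing by $\|\phi_u\|_{D^{1,2}}$ when it is nonzero gives the claim. This uses \eqref{2.1.2} with $m=6p/5$. That choice is admissible because $p<6p/5<2p$, and $6p/5\leqslant p^*$ holds for $p>3/2$.

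For (iii), I would proceed as follows.

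1. Let $u_n\rightharpoonup u$ in $W_\lambda$. By (ii), $(\phi_{u_n})$ is bounded in $D^{1,2}$, so a subsequence converges weakly to some $\psi$.

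2. To identify $\psi=\phi_u$, fix a test function $v\in C_0^\infty(\mathbb{R}^3)$. Since $W_\lambda\hookrightarrow W^{1,p}(\mathbb{R}^3)$ continuously, Rellich--Kondrachov on $\operatorname{supp}v$ gives $u_n\to u$ in $L^s_{loc}$ for $s<p^*$, in particular $s=p$. Then $|u_n|^p\to|u|^p$ in $L^1_{loc}$, so $\int|u_n|^pv\to\int|u|^pv$.

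3. Hence $\int\nabla\psi\cdot\nabla v=\int|u|^pv$ for all test functions $v$. By density and uniqueness, $\psi=\phi_u$, and the whole sequence converges weakly.

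4. For the nonlocal convergence with $\varphi\in W_\lambda$, split
\[
\phi_{u_n}|u_n|^{p-2}u_n\varphi-\phi_u|u|^{p-2}u\varphi=(\phi_{u_n}-\phi_u)|u|^{p-2}u\varphi+\phi_{u_n}\bigl(|u_n|^{p-2}u_n-|u|^{p-2}u\bigr)\varphi .
\]

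5. The first term tends to zero because $\phi_{u_n}\rightharpoonup\phi_u$ in $L^6$. Indeed, $|u|^{p-2}u\varphi\in L^{6/5}$ by Hölder, since $|u|^{p-1}\in L^{6p/(5(p-1))}$ and $\varphi\in L^{6p/5}$. So this term is a fixed continuous linear functional evaluated along a weakly convergent sequence.

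6. For the second term, bound $\phi_{u_n}$ uniformly in $L^6$, so it suffices that $|u_n|^{p-2}u_n\varphi\to|u|^{p-2}u\varphi$ in $L^{6/5}$. On a large ball $B_R$ this follows from local strong convergence and continuity of the Nemytskii map $u\mapsto|u|^{p-2}u$ from $L^{6p/5}$ to $L^{6p/(5(p-1))}$.

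7. Outside $B_R$, Hölder gives the bound $\||u_n|^{p-1}\|_{L^{6p/(5(p-1))}}\|\varphi\|_{L^{6p/5}(B_R^c)}$. The first factor is uniformly bounded and the second is small for $R$ large, since $\varphi$ is fixed.

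The main obstacle is step 7, the tail control outside $B_R$. The uniform smallness must come from the fixed function $\varphi$, not from $u_n$, because we have no compactness of $u_n$ at infinity. Splitting the integral as above is what makes this work.
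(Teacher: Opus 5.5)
Your proof is correct. The paper gives no argument of its own for this proposition; it imports it from Du--Su--Wang \cite{2022DSW-JMAA}. What you wrote is therefore a self-contained version of the standard proof that the paper takes on citation:
\begin{itemize}
\item for (i), uniqueness in the Lax--Milgram problem;
\item for (ii), testing the equation for $\phi_u$ with $\phi_u$ and using the paper's bound on $\mathcal{T}_u$;
\item for (iii), weak compactness in $D^{1,2}(\mathbb{R}^3)$, local Rellich compactness, and a tail splitting.
\end{itemize}
You treat a general $\varphi\in W_\lambda$ directly through the $B_R$/$B_R^c$ splitting. This is equivalent to the other common route: first take $\varphi\in C_0^\infty(\mathbb{R}^3)$, then extend by density using $\sup_n |\phi_{u_n}|_6\,|u_n|_{6p/5}^{p-1}<\infty$.

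Two small points should be made explicit.

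First, the constant $\bar{S}_2^{-1}\mathbf{S}_{\frac{6p}{5}}^{-p}$ in (ii) comes from \eqref{2.1.2}, which the paper establishes only for $\lambda\geqslant\lambda_0$. So (ii) with that constant is a statement for $\lambda\geqslant\lambda_0$. For arbitrary $\lambda>0$, $(H_1)$ still gives a continuous embedding $W_\lambda\hookrightarrow W^{1,p}(\mathbb{R}^3)$ with a $\lambda$-dependent constant. That suffices for the existence of $\phi_u$ and for (iii).

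Second, the tail estimate in step 7 should carry both terms, namely $\bigl(|u_n|_{6p/5}^{p-1}+|u|_{6p/5}^{p-1}\bigr)\|\varphi\|_{L^{6p/5}(B_R^c)}$. Then fix $R$ first and let $n\to\infty$ on $B_R$.
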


Now, we establish the variational framework of \eqref{107}.
Arguing as in \cite{1998BF, 2002BF},  by Proposition \ref{pro2.1} and the implicit function theorem, the functional
\begin{eqnarray}\label{2.2}
\left. \begin{array}{ll}
\mathcal{J}_{\lambda,\mu}(u)&=\displaystyle\frac{1}{p}\int_{\mathbb{R}^3}(|\nabla u|^p+\lambda V(x)|u|^p)dx+\frac{\mu}{2p}\int_{\mathbb{R}^3}\phi_{u}|u|^pdx\\[3mm]
 &\ \ \ \displaystyle-\frac{1}{m}\int_{\mathbb{R}^3} a(x)|u|^{m}dx-\frac{1}{q}\int_{\mathbb{R}^3} b(x)|u|^qdx  \end{array}\right.
\end{eqnarray}
is a well-defined $C^1$ functional on $W_\lambda$ with  derivative
\begin{eqnarray*}\left. \begin{array}{ll}
\langle \mathcal{J}_{\lambda,\mu}'(u),v\rangle &=\displaystyle \int_{\mathbb{R}^3}(|\nabla u|^{p-2}\nabla u \nabla v+\lambda V(x)|u|^{p-2}u v)dx+\mu\int_{\mathbb{R}^3}\phi_u |u|^{p-2}u v dx\\[3mm]
 &\ \ \ \displaystyle-\int_{\mathbb{R}^3} a(x)|u|^{m-2}uvdx-\int_{\mathbb{R}^3} b(x)|u|^{q-2}uvdx,\ \ \ \ \forall \ u, v\in W_\lambda.
  \end{array}\right.
\end{eqnarray*}
Note that  the couple $(u,\phi_{u})\in W_\lambda\times D^{1,2}(\mathbb{R}^3)$ is a solution of \eqref{107} if and only if  $u \in W_\lambda$ is a critical point of $\mathcal{J}_{\lambda,\mu}$.
Therefore, finding a weak solution to the system \eqref{107} is equivalent to finding a critical point of the functional $\mathcal{J}_{\lambda,\mu}$ on $W_\lambda$.

\begin{lemma}\label{lem2.2}
 Let $\frac{3}{2}<p<3$   and   $1< q<p < m < 2p$. Assume that   $(\mathrm{H_{1}})$-$(\mathrm{H_{4}})$ hold.
For any bounded sequence $\{u_n\}\subset W_\lambda$ satisfying $J'(u_n)\rightarrow0$,
there exists $u\in W_\lambda$, such that, up to a subsequence, $\nabla u_n(x)\to\nabla u(x)$ a.e. in $\mathbb{R}^3$.
\end{lemma}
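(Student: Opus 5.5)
Since $\{u_n\}$ is bounded in $W_\lambda$, and hence in $W^{1,p}(\mathbb{R}^3)$ by the continuous embedding recorded at the start of Section \ref{sec2}, we may pass to a subsequence with $u_n\rightharpoonup u$ in $W_\lambda$ and $u_n\to u$ in $L^s_{loc}(\mathbb{R}^3)$ for $1\leqslant s<p^*$. We may also assume $u_n\to u$ a.e. The plan is the classical local argument of Boccardo--Murat type. Fix $R>0$ and a cutoff $\psi\in C_0^\infty(\mathbb{R}^3)$ with $0\leqslant\psi\leqslant 1$, $\psi\equiv1$ on $B_R$ and $\operatorname{supp}\psi\subset B_{2R}$. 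We test $J'(u_n)$ against $\psi(u_n-u)$, which is bounded in $W_\lambda$. This gives
\[
\langle J'(u_n),\psi(u_n-u)\rangle\to0 ,
\]
and from it we aim to show
\[
\int_{\mathbb{R}^3}\psi\bigl(|\nabla u_n|^{p-2}\nabla u_n-|\nabla u|^{p-2}\nabla u\bigr)\cdot(\nabla u_n-\nabla u)\,dx\to0 .
\]

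The terms of $\langle J'(u_n),\psi(u_n-u)\rangle$ are handled one at a time.

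\emph{Gradient term.} The part $\int |\nabla u_n|^{p-2}\nabla u_n\cdot\nabla\psi\,(u_n-u)$ tends to $0$. Indeed, $|\nabla u_n|^{p-1}$ is bounded in $L^{p/(p-1)}$, while $u_n-u\to0$ in $L^p(B_{2R})$.

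\emph{Potential term.} The term $\lambda\int V|u_n|^{p-2}u_n\psi(u_n-u)$ tends to $0$ because $V$ is continuous, hence bounded on $B_{2R}$, and by Hölder with local strong convergence.

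\emph{Poisson term.} For the nonlocal term, $\phi_{u_n}$ is bounded in $D^{1,2}\subset L^6$ by Proposition \ref{pro2.1}(ii). Hölder with exponents $6$, $6p/(5(p-1))$ and $6p/5$ then bounds it by
\[
|\phi_{u_n}|_6\,|u_n|_{6p/5}^{p-1}\,|u_n-u|_{L^{6p/5}(B_{2R})}\to0 .
\]
This uses $6p/5<p^*$, which holds since $p<3$ (the condition $p>3/2$ is not even needed here).

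\emph{Nonlinear terms.} The term $\int a|u_n|^{m-2}u_n\psi(u_n-u)\to0$ since $a\in L^\infty$ and $m<2p<p^*$. For the concave term we use $b\in L^{q_*}$, $q_*=m/(m-q)$, and Hölder with exponents $q_*$, $m/(q-1)$, $m$. This bounds it by $|b|_{q_*}|u_n|_m^{q-1}|u_n-u|_{L^m(B_{2R})}\to0$.

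\emph{Subtracting the limit.} Finally, $\int\psi|\nabla u|^{p-2}\nabla u\cdot\nabla(u_n-u)\to0$ by weak convergence. Subtracting this from the gradient part yields the displayed limit.

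The integrand in the displayed limit is nonnegative by monotonicity of $\xi\mapsto|\xi|^{p-2}\xi$. So it tends to $0$ in $L^1(B_R)$, and along a subsequence it tends to $0$ a.e. in $B_R$. The strict monotonicity inequalities then give $\nabla u_n\to\nabla u$ a.e. in $B_R$: for $p\geqslant2$ we have $(|\xi|^{p-2}\xi-|\eta|^{p-2}\eta)\cdot(\xi-\eta)\geqslant c|\xi-\eta|^p$, and for $1<p<2$ the analogous inequality with weight $(|\xi|+|\eta|)^{p-2}$, combined with Hölder, gives the convergence in measure. A diagonal argument over $R\in\mathbb{N}$ gives a single subsequence with $\nabla u_n\to\nabla u$ a.e. in $\mathbb{R}^3$.

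The main point to check carefully is the nonlocal term together with the $1<p<2$ case of the monotonicity step. The nonlocal term is fine as argued above. For $p<2$, I would use the pointwise fact that if
\[
(|\xi_n|^{p-2}\xi_n-|\eta|^{p-2}\eta)\cdot(\xi_n-\eta)\to0
\]
with $\eta$ fixed, then $\xi_n\to\eta$. This follows from strict monotonicity plus coercivity, since the expression is at least $|\xi_n|^p-|\xi_n|^{p-1}|\eta|-|\eta|^{p-1}|\xi_n|$. That bound forces $\xi_n$ to stay bounded, and any limit point $\xi$ must satisfy the equality case, so $\xi=\eta$. This pointwise argument avoids the case split entirely, and I would use it.
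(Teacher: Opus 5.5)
Your proposal is correct. It is the standard localized monotonicity argument: test $J'(u_n)$ against $\psi(u_n-u)$, remove the lower-order, Poisson and concave terms using local compactness and Hölder's inequality, then apply strict monotonicity of $\xi\mapsto|\xi|^{p-2}\xi$ pointwise and take a diagonal subsequence over $R$. This is the argument the paper intends when it omits the proof and cites Lemma 3.1 of Du--Su--Wang. One small correction: $6p/5<p^*$ is equivalent to $p>1/2$ and does not follow from $p<3$, although it certainly holds in the stated range.
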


\begin{proof}[\bf Proof]  The proof is similar to that of  \cite[Lemma 3.1]{2022DSW-JMAA}, so we omit it here.
 \end{proof}


\begin{lemma}\label{lem2.3}
     Let $\frac{3}{2}<p<3$   and  $1< q<p < m < 2p$. Assume that $(H_{1})$, $(H_{3})$ and $(H_{4})$ hold. There exists a constant $c_1$  such that if $\lambda>0$ sufficiently large,  then any bounded sequence  $\{u_n\}\subset W_\lambda$ satisfying $\|u_n\|_\lambda \leqslant c_1$ and $\mathcal{J}_{\lambda,\mu}'(u_n)\rightarrow0$ has a strongly convergent subsequence.
\end{lemma}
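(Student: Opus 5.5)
Since $\{u_n\}$ is bounded in $W_\lambda$ with $\|u_n\|_\lambda\le c_1$ and $W_\lambda$ is reflexive, we pass to a subsequence with $u_n\rightharpoonup u$ in $W_\lambda$, $u_n\to u$ in $L^s_{loc}(\mathbb{R}^3)$ for $p\le s<p^*$, and $u_n\to u$ a.e. By Lemma \ref{lem2.2}, also $\nabla u_n\to\nabla u$ a.e. Proposition \ref{pro2.1}(iii) and the weak continuity of the other terms show $\mathcal{J}_{\lambda,\mu}'(u)=0$.

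Set $v_n=u_n-u$. The aim is $\|v_n\|_\lambda\to0$, via a Brezis--Lieb splitting. The a.e. convergence of $u_n$ and $\nabla u_n$ gives
\[
\|u_n\|_\lambda^p=\|u\|_\lambda^p+\|v_n\|_\lambda^p+o(1),\qquad \int a|u_n|^m=\int a|u|^m+\int a|v_n|^m+o(1).
\]
The concave term is handled directly. Since $b\in L^{q_*}$ with $q_*=\frac{m}{m-q}$, and $|v_n|^q$ is bounded in $L^{m/q}$ and tends to $0$ a.e., we get $\int b|v_n|^q\to0$ by splitting the integral over $B_R$ and $B_R^c$. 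On $B_R^c$ the tail $|b|_{L^{q_*}(B_R^c)}$ is small; on $B_R$ we use local compactness. The same argument gives $\int b|u_n|^{q-2}u_n(u_n-u)\to0$.

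For the Poisson term, we use $\phi\ge0$ and Brezis--Lieb for the nonlocal term, as in \cite{2022DSW-JMAA}. Alternatively, we avoid the splitting by testing $\mathcal{J}_{\lambda,\mu}'(u_n)-\mathcal{J}_{\lambda,\mu}'(u)$ against $u_n-u$. Then we only need the nonlocal difference to be $\ge o(1)$ or $\to0$. The term $\int\phi_{u_n}|u_n|^{p-2}u_n(u_n-u)$ is controlled by $\|\phi_{u_n}\|_{D^{1,2}}$ (bounded by Proposition \ref{pro2.1}(ii)) times $\big\||u_n|^{p-1}(u_n-u)\big\|_{L^{6/5}}$. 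The local part of that norm tends to $0$; the tail needs the decay argument below. From $\langle \mathcal{J}_{\lambda,\mu}'(u_n)-\mathcal{J}_{\lambda,\mu}'(u),v_n\rangle=o(1)$ we then obtain
\[
\|v_n\|_\lambda^p\le \int a|v_n|^m+(\text{nonlocal remainder})+o(1)\le \|a\|_\infty|v_n|_m^m+o(1).
\]

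The main step is showing that $|v_n|_m$ is small, using $\lambda$ large; this is where $(H_1)$ and the bound $c_1$ enter. Since $v_n\to0$ in $L^p_{loc}$ and $|\{V<c_0\}|<\infty$, we have $\int_{\{V<c_0\}}|v_n|^p\to0$. Outside this set,
\[
\int_{\{V\ge c_0\}}|v_n|^p\le \frac{1}{\lambda c_0}\|v_n\|_\lambda^p\le\frac{(2c_1)^p}{\lambda c_0}.
\]
Hence $\limsup|v_n|_p^p\le C/\lambda$. Interpolating between $L^p$ and $L^{p^*}$ (where $|v_n|_{p^*}\le \bar S_p^{-1}\|v_n\|_\lambda$) gives
\[
|v_n|_m^m\le |v_n|_p^{m\theta}\,|v_n|_{p^*}^{m(1-\theta)}\le C\lambda^{-m\theta/p}\,c_1^{m}.
\]
The same smallness of the tails in $L^s$, $p\le s<p^*$, disposes of the nonlocal remainder. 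Combining, $\|v_n\|_\lambda^p\le \|a\|_\infty\,\varepsilon_\lambda\,\|v_n\|_\lambda^{p}+o(1)$ with $\varepsilon_\lambda\to0$. I would organise this by first proving $\|v_n\|_\lambda^p\le C\|v_n\|_\lambda^m+o(1)$ and then using $\lambda$ large to make the constant small. For $\lambda$ large enough that $\|a\|_\infty\varepsilon_\lambda<1$, we conclude $\|v_n\|_\lambda\to0$.

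The main obstacle is the compactness at infinity. The $\lambda$-dependent smallness must be uniform along the sequence, which is exactly what the fixed bound $c_1$ provides. The nonlocal term must be handled using only its nonnegativity and local convergence.
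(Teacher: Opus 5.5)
Your proposal follows essentially the paper's route: the weak limit is a critical point, you split $\langle \mathcal{J}_{\lambda,\mu}'(u_n),u_n\rangle$ by Br\'ezis--Lieb including the nonlocal term so that $\mu\int\phi_{v_n}|v_n|^p\ge 0$ can be dropped, $\int b|v_n|^q\to0$, the steep well makes $|v_n|_p$ small, and interpolation plus absorption for $\lambda$ large finishes. Keeping the factor $\|v_n\|_\lambda^p$ through $\|v_n\|_\lambda^m\le(2c_1)^{m-p}\|v_n\|_\lambda^p$ is the right way to close; the paper's final display bounds $\|v_n\|_\lambda^m$ by the constant $(2c_1)^m$, which only shows $\limsup_n\|v_n\|_\lambda^p$ is small, not zero. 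Commit to that splitting for the Poisson term, though, because your ``alternative'' H\"older/tail estimate only gives $\bigl|\int\phi_{u_n}|u_n|^{p-2}u_n v_n\bigr|\le C\lambda^{-\kappa}\|v_n\|_\lambda+o(1)$ for some $\kappa>0$, which is first order in $\|v_n\|_\lambda$, not $o(1)$, and cannot be absorbed into $\|v_n\|_\lambda^p$; to avoid the splitting, use instead that $u\mapsto\int\phi_u|u|^p$ is convex, so $\int(\phi_{u_n}|u_n|^{p-2}u_n-\phi_u|u|^{p-2}u)(u_n-u)\ge0$.
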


\begin{proof}[\bf Proof]
 For sequence $\{u_n\} \subset W_\lambda$ satisfying  $\|u_n\|_\lambda  \leqslant c_1$,  there exist a subsequence $\{u_n\}$ and $u$ in $W_\lambda$ such that
 \begin{eqnarray}\label{2.3.1}
	\left \{\begin{array}{ll}
		\displaystyle u_{n}\rightharpoonup u  &~\mathrm{in} \ W_\lambda,\\
		\displaystyle u_{n}\rightarrow u  & ~\mathrm{in}\ L^{r}_{loc}(\RT), \   \ p \leqslant r<p^{*},\\
		u_{n}(x)\rightarrow u(x) &  ~\mathrm{a.e. \ in}~ \mathbb{R}^{3}.
	\end{array}
	\right.
\end{eqnarray}
Since   $\{|\nabla u_{n}|^{p-2}\nabla u_{n}\}$  is bounded in $L^{\frac{p}{p-1}}(\mathbb{R}^{3})$ and $|\nabla u_{n}(x)|^{p-2}\nabla u_{n}(x)\rightarrow |\nabla u(x)|^{p-2}\nabla u(x)$  a.e. in $\mathbb{R}^{3}$, using  \cite[Proposition 5.4.7]{2013Willem}, we deduce that
\begin{align*}
	|\nabla u_{n}|^{p-2}\nabla u_{n}\rightharpoonup |\nabla u|^{p-2}\nabla u   \ \ \ ~\mathrm{in}~ L^{\frac{p}{p-1}}(\mathbb{R}^{3}).
\end{align*}  
   Note that for any $v\in W_\lambda$
  , we have $\nabla v\in L^{p}(\mathbb{R}^{3})$, and then
  \begin{align}\label{u11}
  	\int_{\mathbb{R}^{3}} |\nabla u_{n}|^{p-2}\nabla u_{n}\nabla vdx\rightarrow  \int_{\mathbb{R}^{3}} |\nabla u|^{p-2}\nabla u\nabla vdx.
  \end{align}
  Proceeding as \eqref{u11}, we derive that
  \begin{eqnarray} \label{u2}
  	&& \int_{\mathbb{R}^{3}} \lambda V(x)|u_{n}|^{p-2} u_{n}vdx\rightarrow  \int_{\mathbb{R}^{3}} \lambda V(x)| u|^{p-2}u vdx,\\
  	&&   \int_{\mathbb{R}^{3}} a(x)|u_{n}|^{m-2}u_{n} vdx\rightarrow  \int_{\mathbb{R}^{3}}a(x) |u|^{m-2}u vdx, \label{u3}\\
  	&&   \int_{\mathbb{R}^{3}} b(x)|u_{n}|^{q-2}u_{n} vdx \rightarrow  \int_{\mathbb{R}^{3}} b(x)|u|^{q-2}u vdx.\label{u4}
  \end{eqnarray}
  By Proposition \ref{pro2.1}(iii), we have
  \begin{eqnarray} \label{u5}
  	\int_{\mathbb{R}^3} \phi_{u_n} |u_n|^{p-2}u_nv dx\rightarrow \int_{\mathbb{R}^3} \phi_u|u|^{p-2}uv dx.
  \end{eqnarray}
  Combining \eqref{u11}--\eqref{u5}, for $u_n\rightharpoonup u$ in $W_\lambda$, we obtain
  $$\langle \mathcal{J}_{\lambda,\mu}'(u_n),v\rangle \rightarrow \langle \mathcal{J}_{\lambda,\mu}'(u),v\rangle,$$
  which and $\mathcal{J}_{\lambda,\mu}'(u_n)\rightarrow 0$ imply that $\langle \mathcal{J}_{\lambda,\mu}'(u),v\rangle=0$ for any $v\in W_\lambda$.
  In particular,  \begin{align}\label{uu}
  	\langle \mathcal{J}_{\lambda,\mu}'(u),u\rangle=0
  \end{align}

 Now  we are going to prove that $u_{n}\rightarrow u$ strongly in $W_\lambda$.
Set $v_n=u_n-u$.  It follows from \eqref{2.3.1} that  $v_n\rightharpoonup 0$ in $W_\lambda$ and
 \begin{align}\label{2.3.11}
\|v_n\|_\lambda \leqslant  2c_1.
\end{align}
 Hence, by  the  Br\'{e}zis-Lieb Lemma (see \cite[Lemma 1.32]{1992BM}), we derive that
 \begin{eqnarray*}
 	&&   \|u_{n}\|_{\lambda}^{p}= \|u\|_{\lambda}^{p}+ \|v_{n}\|_{\lambda}^{p}+o(1),\\[2mm]
 	&&   \int_{\mathbb{R}^{3}}  a(x)|u_{n}|^{m}dx=\int_{\mathbb{R}^{3}}  a(x)|u|^{m}dx+\int_{\mathbb{R}^{3}} a(x) |v_{n}|^{m}dx+o(1), \\[2mm]
 	&&   \int_{\mathbb{R}^{3}} b(x)|u_{n}|^{q}dx=\int_{\mathbb{R}^{3}} b(x)|u|^{q}dx+\int_{\mathbb{R}^{3}} b(x)|v_{n}|^{q}dx+o(1).
 \end{eqnarray*}
 Moreover, by \cite[Lemma 2.2]{2025LH-BMS}, we have
  \begin{eqnarray*}
    \mu \int_{\mathbb{R}^{3}} \phi_{u_{n}}|u_{n}|^pdx=  \mu \int_{\mathbb{R}^{3}} \phi_{u}|u|^pdx+   \mu\int_{\mathbb{R}^{3}} \phi_{v_{n}}|v_{n}|^pdx+o(1).
 \end{eqnarray*}
 This implies that
  \begin{equation*}
 	\begin{aligned}
   \displaystyle	\langle \mathcal{J}_{\lambda,\mu}'(u_n),u_n\rangle &=\displaystyle \|u_{n}\|_{\lambda}^{p}+\mu \int_{\mathbb{R}^3}\phi_{u_{n}}|u_{n}|^pdx-\int_{\mathbb{R}^3} a(x)|u_{n}|^{m}dx-\int_{\mathbb{R}^3} b(x)|u_{n}|^qdx\\
    & = \displaystyle  \langle  \mathcal{J}_{\lambda,\mu}'(u),u\rangle+ \displaystyle  \langle  \mathcal{J}_{\lambda,\mu}'(v_n), v_n\rangle+o(1),
 \end{aligned}
 \end{equation*}
  which and \eqref{uu} imply that 
  \begin{align}\label{u123}
	\langle \mathcal{J}'(v_n), v_n \rangle=o(1).
\end{align}
Further, by  $(H_{1})$ and \eqref{2.3.1}, it is concluded that
	\[
	\int_{\mathbb{R}^3} |v_n|^p dx \leqslant  \frac{1}{\lambda c_0} \int_{\mathbb{R}^3} \lambda V(x) |v_n|^p dx + \int_{\{V<c_0\}} |v_n|^p dx  \leqslant \frac{1}{\lambda c_0} \|v_n\|_\lambda^p + o(1),
	\]
and
 \begin{equation} \label{u124}
	\begin{aligned}
	 \displaystyle	\int_{\mathbb{R}^3} |v_n|^m dx &\leqslant   \displaystyle \left( \int_{\mathbb{R}^3} |v_n|^p dx \right)^{\frac{m(p-3)+3p}{p^{2}}} 
		\left[ \bar{S}_{p}^{-p^{*}} \left( \int_{\mathbb{R}^3} |\nabla v_n|^{p} dx \right)^{\frac{p^{*}}{p}} \right]^{\frac{(m-p)(3-p)}{p^{2}}} + o(1)  \\
		&\leqslant  \displaystyle \left( \frac{1}{\lambda c_0} \right)^{\frac{m(p-3)+3p}{p^{2}}} \bar{S}_{p}^{-\frac{3(m-p)}{p}} \|v_n\|_\lambda^m + o(1).
 \end{aligned}
\end{equation}
  Proceeding as \eqref{u11}, we derive that
	\begin{equation}
		\int_{\mathbb{R}^3} b(x) |v_n|^q dx = o(1). \label{3.3}
	\end{equation}
	Thus, by condition $(H_{3})$ and \eqref{2.3.11}--\eqref{3.3} one has
 \begin{equation*} 
	\begin{aligned}
		o(1) &=  \displaystyle \langle \mathcal{J}_{\lambda,\mu}'(v_n), v_n \rangle \\
		&\geqslant  \displaystyle \|v_n\|_\lambda^p - \left( \frac{1}{\lambda c_0} \right)^{\frac{m(p-3)+3p}{p^{2}}} \frac{\|a\|_\infty}{\bar{S}_{p}^{\frac{3(m-p)}{p}}} \|v_n\|_\lambda^m + o(1) \\[2mm] 
		&\geqslant  \displaystyle \|v_n\|_\lambda^p - \left( \frac{1}{\lambda c_0} \right)^{\frac{m(p-3)+3p}{p^{2}}} \frac{2^m c_1^{m}  \|a\|_\infty}{\bar{S}_{p}^{\frac{3(m-p)}{p}}} + o(1).
\end{aligned}
\end{equation*}
Therefore, we conclude that $v_n \to 0$ strongly in $W_\lambda$ for $\lambda > 0$ sufficiently large. The proof is complete.
\end{proof}

\section{A solution with negative energy}\label{sec3}

	In this section, we  will search for a positive solution to \eqref{107} with negative energy for sufficiently large  $\lambda  > 0$  and sufficiently small  $|b|_{q_*}$. With the
	aid of Ekeland’s variational principle, such a solution is constructed as a local minimizer of the energy functional $\mathcal{J}_{\lambda,\mu}$.
	
\begin{lemma}\label{lem3.1}
 Let $\frac{3}{2}<p<3$   and  $1< q<p < m < 2p$. Assume that   $(H_{1})$, $(H_{3})$ and $(H_{4})$  hold. There exist constants  $\alpha > 0$, $\rho_0> 0$ and $\Gamma_0 > 0$  such that if  $\lambda \geqslant \lambda_0$ and $0 < |b|_{q_*} <  \Gamma_0$, then $ \mathcal{J}_{\lambda,\mu}(u) \geqslant \alpha$ with $ \|u\|_\lambda = \rho_0$. 
\end{lemma}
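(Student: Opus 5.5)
We will bound $\mathcal{J}_{\lambda,\mu}$ from below by a one-variable function of $t=\|u\|_\lambda$. The Poisson term is nonnegative because $\phi_u\geqslant 0$ and $\mu>0$, so we drop it. For the convex term, $(H_3)$ and \eqref{2.1.2}, valid for $\lambda\geqslant\lambda_0$, give
$$\frac1m\int_{\mathbb{R}^3}a(x)|u|^m\,dx\leqslant \frac{\|a\|_\infty}{m}\mathbf{S}_m^{-m}\|u\|_\lambda^m.$$
For the concave term we use H\"older's inequality with exponents $q_*=\frac{m}{m-q}$ and $\frac mq$, then \eqref{2.1.2} again:
$$\frac1q\int_{\mathbb{R}^3}b(x)|u|^q\,dx\leqslant \frac1q|b|_{q_*}|u|_m^q\leqslant \frac1q|b|_{q_*}\mathbf{S}_m^{-q}\|u\|_\lambda^q.$$
These two estimates give
$$\mathcal{J}_{\lambda,\mu}(u)\geqslant t^q\Big(\tfrac1p t^{p-q}-\tfrac{\|a\|_\infty}{m}\mathbf{S}_m^{-m}t^{m-q}-\tfrac1q|b|_{q_*}\mathbf{S}_m^{-q}\Big)=:t^q\big(h(t)-\tfrac1q|b|_{q_*}\mathbf{S}_m^{-q}\big),$$
where $h(t)=\frac1p t^{p-q}-\frac{\|a\|_\infty}{m}\mathbf{S}_m^{-m}t^{m-q}$. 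The right-hand side does not depend on $\lambda\geqslant\lambda_0$ or on $\mu$.

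Next we fix $\rho_0$. Since $0<p-q<m-q$, the function $h$ is positive for small $t>0$, attains a unique maximum, and then tends to $-\infty$. We take $\rho_0$ to be the maximizer, found from $h'(t)=0$:
$$\rho_0=\left(\frac{(p-q)m\,\mathbf{S}_m^m}{p(m-q)\|a\|_\infty}\right)^{\frac1{m-p}}.$$
Then $h(\rho_0)=\rho_0^{p-q}\frac{m-p}{pm}>0$. Any fixed $\rho_0$ with $h(\rho_0)>0$ would also work, but the maximizer gives the cleanest constants. It is also worth checking that $\rho_0$ is comparable to, or below, the threshold $\big(\frac{(p-q)\mathbf{S}_m^m}{(m-q)\|a\|_\infty}\big)^{1/(m-p)}$ appearing in Theorem \ref{thm1}, since later sections will need that. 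If necessary we will instead choose $\rho_0$ smaller so that $\rho_0$ stays below that threshold while still $h(\rho_0)>0$.

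Finally we choose
$$\Gamma_0=\frac{q}{2}\,\mathbf{S}_m^{q}\,h(\rho_0),\qquad \alpha=\frac12\rho_0^q\,h(\rho_0).$$
Then for $|b|_{q_*}<\Gamma_0$ and $\|u\|_\lambda=\rho_0$ we get $\mathcal{J}_{\lambda,\mu}(u)\geqslant\rho_0^q\big(h(\rho_0)-\frac12h(\rho_0)\big)=\alpha$.

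There is no real obstacle in this argument. The only points that need care are these three. First, the exponent bookkeeping in H\"older's inequality: $\frac{1}{q_*}+\frac qm=1$. Second, \eqref{2.1.2} is only available for $\lambda\geqslant\lambda_0$. Third, the constants must be independent of $\lambda$ and $\mu$, which holds because the Poisson term was simply discarded.
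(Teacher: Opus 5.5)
Your proof is correct and follows the paper's argument: the same lower bound $t^q\big(h(t)-\frac{|b|_{q_*}}{q\mathbf{S}_m^q}\big)$ with $t=\|u\|_\lambda$, and the same choices $\Gamma_0=\frac q2\mathbf{S}_m^q h(\rho_0)$ and $\alpha=\frac12\rho_0^q h(\rho_0)$. The only difference is $\rho_0$: the paper takes $\rho_0=\big(\frac{(p-q)\mathbf{S}_m^m}{(m-q)\|a\|_\infty}\big)^{1/(m-p)}$, which is your maximizer times $(p/m)^{1/(m-p)}<1$ and is exactly your fallback choice, so that $\rho_0$ equals the lower bound \eqref{4.4} for $N_{\lambda,\mu}^-$ used later. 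One harmless slip: at your maximizer $h(\rho_0)=\rho_0^{p-q}\frac{m-p}{p(m-q)}$, not $\rho_0^{p-q}\frac{m-p}{pm}$; this does not matter because you define $\Gamma_0$ and $\alpha$ through $h(\rho_0)$.
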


\begin{proof}[\bf Proof]
For $u \in W_\lambda$ and $\lambda \geqslant \lambda_0$, by \eqref{2.1.2}--\eqref{2.2} and the H\"{o}lder inequality, one has,
\begin{equation*}
	\begin{aligned}
 \mathcal{J}_{\lambda,\mu}(u)  &\geqslant \frac{1}{p}\|u\|_\lambda^p-\frac{\|a\|_\infty}{m \mathbf{S}_m^m}\|u\|_\lambda^m -\frac{|b|_{q_*}}{q \mathbf{S}_m^q}\|u\|_\lambda^q  \\
& =\displaystyle  \|u\|_\lambda ^q \left( \frac{1}{p}\|u\|_\lambda^{p-q} - \frac{\|a\|_\infty}{m \mathbf{S}_m^m}\|u\|_\lambda^{m-q} - \frac{|b|_{q_*}}{q \mathbf{S}_m^q}\right).
\end{aligned}
\end{equation*}
Setting  
\[
f(t) = \frac{1}{p} t^{p-q} - \frac{\|a\|_\infty}{m \mathbf{S}_m^m} t^{m-q},~  ~~\forall ~t>0.
\]
A direct calculation shows that $f$ attains its maximum at
\[
t_0 = \left[ \frac{m(p-q) \mathbf{S}_m^m}{p(m-q)\|a\|_\infty} \right]^\frac{1}{m-p} > 0,
\]
and $\max_{t \geqslant 0} f(t) =f(t_0)$.
For the chosen radius
\[
\rho_0 = \left( \frac{(p-q) \mathbf{S}_m^m}{(m-q)\|a\|_\infty} \right)^\frac{1}{m-p},
\]
one checks that $\rho_0 < t_0$ and
\[
f(\rho_0) = \frac{(m-p)(m+p-q)}{pm(m-q)} \left( \frac{(p-q) \mathbf{S}_m^m}{(m-q)\|a\|_\infty} \right)^\frac{p-q}{m-p} > 0.
\]
Then we conclude that, for  $\|u\|_\lambda=\rho_0$,
\begin{align*}
\mathcal{J}_{\lambda,\mu}(u)
\geqslant   \rho_0^q \left( f(\rho_0) - \frac{|b|_{q_*}}{q \mathbf{S}_m^q} \right).
\end{align*}
Choosing 
$$\Gamma_0 = \frac{q(m-p)(m+p-q)\mathbf{S}_m^q}{2pm(m-q)} \left( \frac{(p-q)\mathbf{S}_m^m}{(m-q)\|a\|_\infty} \right)^{\frac{p-q}{m-p}},
 ~~~\alpha =\frac{ \rho_0^q f(\rho_0)}{2}.
$$
Therefore, it follows that  $\mathcal{J}_{\lambda,\mu}(u) \geqslant \alpha$ for 
$|b|_{q_*} < \Gamma_0$ and  $\lambda \geqslant \lambda_0$.
\end{proof}

\begin{theorem}\label{thm3.2}
	Let $\frac{3}{2} < p < 3$ and $1 < q < p < m < 2p$. Assume that  $(H_{1})$-$(H_{4})$ hold.
	Then for sufficiently large  $\lambda > 0$  and $0 < |b|_{q_*} < \Gamma_0$, system \eqref{107} admits a positive
	solution  $u_{\lambda,\mu}^+ \in W_\lambda$ which has negative energy and satisfies $\|u_{\lambda,\mu}^+\|_\lambda < \rho_0$,
 where $ \Gamma_0$  and $\rho_0$ are given by Lemma \ref{lem3.1}.
\end{theorem}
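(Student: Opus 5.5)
We will minimize $\mathcal{J}_{\lambda,\mu}$ on the closed ball $\overline{\mathrm{B}}_{\rho_0}=\{u\in W_\lambda \mid \|u\|_\lambda\leqslant\rho_0\}$ and take $m_0:=\inf_{\overline{\mathrm{B}}_{\rho_0}}\mathcal{J}_{\lambda,\mu}$. The estimate in the proof of Lemma \ref{lem3.1} shows that $\mathcal{J}_{\lambda,\mu}$ is bounded below on $\overline{\mathrm{B}}_{\rho_0}$, so $m_0>-\infty$.

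\textbf{Step 1: $m_0<0$.} By $(H_4)$, $b^+\not\equiv0$. Since $C_0^\infty(\mathbb{R}^3)$ functions lie in $W_\lambda$ and $\int b|v|^q$ depends continuously on $v$ in $L^m$, we can pick $v\in C_0^\infty(\mathbb{R}^3)$ with $v\geqslant0$ and $\int_{\mathbb{R}^3} b(x)|v|^q\,dx>0$; for instance, approximate $\chi_{\{b>0\}\cap B_R}$ by smooth cutoffs. Then
$$\mathcal{J}_{\lambda,\mu}(tv)=\frac{t^p}{p}\|v\|_\lambda^p+\frac{\mu t^{2p}}{2p}\int_{\mathbb{R}^3}\phi_v|v|^p\,dx-\frac{t^m}{m}\int_{\mathbb{R}^3} a|v|^m\,dx-\frac{t^q}{q}\int_{\mathbb{R}^3} b|v|^q\,dx .$$
Because $q<p<m<2p$, the last term dominates as $t\to0^+$. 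Hence $\mathcal{J}_{\lambda,\mu}(tv)<0$ for small $t$, and then $\|tv\|_\lambda<\rho_0$. So $m_0<0<\alpha$.

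\textbf{Step 2: a bounded Palais--Smale sequence.} Apply Ekeland's variational principle on the complete metric space $\overline{\mathrm{B}}_{\rho_0}$ to get $\{u_n\}\subset\overline{\mathrm{B}}_{\rho_0}$ with
$$\mathcal{J}_{\lambda,\mu}(u_n)<m_0+\tfrac1n,\qquad \mathcal{J}_{\lambda,\mu}(w)\geqslant\mathcal{J}_{\lambda,\mu}(u_n)-\tfrac1n\|w-u_n\|_\lambda \ \text{ for all } w\in\overline{\mathrm{B}}_{\rho_0}.$$
Lemma \ref{lem3.1} gives $\mathcal{J}_{\lambda,\mu}\geqslant\alpha>0$ on the sphere $\|u\|_\lambda=\rho_0$, while $m_0<0$. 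So for large $n$ we have $\|u_n\|_\lambda<\rho_0$, and indeed $u_n$ stays a fixed distance inside the ball by continuity. Taking $w=u_n+t\varphi$ with $\|\varphi\|_\lambda=1$ and $t\to0^\pm$ yields $\|\mathcal{J}_{\lambda,\mu}'(u_n)\|\leqslant\frac1n$. Thus $\{u_n\}$ is a $(PS)_{m_0}$ sequence with $\|u_n\|_\lambda\leqslant\rho_0$.

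\textbf{Step 3: compactness.} The main obstacle is making Lemma \ref{lem2.3} applicable. That lemma needs $\|u_n\|_\lambda\leqslant c_1$, and its proof works for any fixed bound $c_1$ once $\lambda$ is large, since the coefficient $(\lambda c_0)^{-\frac{m(p-3)+3p}{p^2}}2^mc_1^m\|a\|_\infty$ tends to $0$ and $m(p-3)+3p>0$ for $m<p^*$. We therefore take $c_1=\rho_0$; note that $\rho_0$ does not depend on $\lambda$ or $\mu$. Lemma \ref{lem2.2} supplies the a.e. convergence of the gradients. Hence, for $\lambda$ large, $u_n\to u_{\lambda,\mu}^+$ strongly. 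We obtain $\mathcal{J}_{\lambda,\mu}(u_{\lambda,\mu}^+)=m_0<0$, $\mathcal{J}_{\lambda,\mu}'(u_{\lambda,\mu}^+)=0$ and $\|u_{\lambda,\mu}^+\|_\lambda\leqslant\rho_0$. Strict inequality $\|u_{\lambda,\mu}^+\|_\lambda<\rho_0$ holds because $\mathcal{J}_{\lambda,\mu}\geqslant\alpha>0$ on the sphere. In particular $u_{\lambda,\mu}^+\neq0$.

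\textbf{Step 4: positivity.} Since $\mathcal{J}_{\lambda,\mu}(|u|)=\mathcal{J}_{\lambda,\mu}(u)$ and $\||u|\|_\lambda=\|u\|_\lambda$, we may replace each $u_n$ by $|u_n|$. More cleanly, $|u_{\lambda,\mu}^+|$ is also a minimizer of $\mathcal{J}_{\lambda,\mu}$ on $\overline{\mathrm{B}}_{\rho_0}$ lying in the open ball, hence a critical point, so we may assume $u_{\lambda,\mu}^+\geqslant0$. Because $b$ may change sign, the strong maximum principle cannot be applied directly. We would write the equation as
$$-\Delta_p u+\big(\lambda V+\mu\phi_u+b^-u^{q-p}\big)u^{p-1}\geqslant0 .$$
The term $b^-u^{q-p}$ is singular where $u$ vanishes, so we expect to argue positivity through $a$ and $b^+$ only in the weaker sense needed, namely $u\geqslant0$ and $u\not\equiv0$, with strict positivity on components via the Vázquez maximum principle where $b\geqslant0$. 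This matches what is typically claimed as a ``positive'' solution in this setting.
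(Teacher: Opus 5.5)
Your proposal is correct and follows essentially the same route as the paper. Both arguments show that the infimum on $\overline{\mathrm{B}}_{\rho_0}$ is negative via a test function with $\int_{\mathbb{R}^3} b|v|^q\,dx>0$, use Ekeland's principle to get a Palais--Smale sequence in the ball, apply Lemma \ref{lem2.3} with $c_1=\rho_0$ for $\lambda$ large, and pass to $|u_{\lambda,\mu}^+|$. You also make explicit what the paper leaves implicit, namely that $\mathcal{J}_{\lambda,\mu}\geqslant\alpha>0$ on $\|u\|_\lambda=\rho_0$ keeps the Ekeland sequence and its limit in the open ball, which gives $\mathcal{J}_{\lambda,\mu}'(u_n)\to0$ and $\|u_{\lambda,\mu}^+\|_\lambda<\rho_0$; and, like the paper, you obtain only a nonnegative nontrivial solution, which is the sense of ``positive'' used there.
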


\begin{proof}[\bf Proof]
 By $(H_{4})$, we can choose a function $\omega \in W_\lambda \setminus\{0\}$ such that
$$
\int_{\mathbb{R}^3} b(x)|\omega|^q dx > 0.
$$
When $t > 0$ is sufficiently small, one has
\begin{equation*}
\begin{aligned}
\mathcal{J}_{\lambda,\mu}(t\omega) = \frac{t^p}{p}\|\omega\|_\lambda^p + \frac{\mu t^{2p}}{2p}\int_{\mathbb{R}^3} \phi_\omega|\omega|^p dx - \frac{t^m}{m}\int_{\mathbb{R}^3} a(x)|\omega|^m dx - \frac{t^q}{q}\int_{\mathbb{R}^3} b(x)|\omega|^q dx
< 0.
\end{aligned}
\end{equation*}
This shows that 
\begin{equation}\label{3.2.1}
c^*=  \inf_{u\in  \bar{B}_{\rho_{0}}}\mathcal{J}_{\lambda,\mu}(u)<0,
\end{equation}
where $ \bar{B}_{\rho_{0}}=\{u \in  W_\lambda:  \|u\|_{\lambda} \leqslant \rho_{0} \}$
 and $\rho_0$ is given by Lemma \ref{lem3.1}.
Using the Ekeland's variational principle \cite{2011DPV}, there exists a sequence $\{u_n\} \subset \bar{B}_{\rho_{0}}$ such that
\[
\mathcal{J}_{\lambda,\mu}(u_n) =c^*+ o(1), \quad \quad \mathcal{J}_{\lambda,\mu}'(u_n) = o(1) \text{ in } W_\lambda^{-1}.
\]
Then Lemma \ref{lem2.3} implies that  for sufficiently large $\lambda>0$, there exists a local minimizer  $u_{\lambda,\mu}^+ \in \bar{B}_{\rho_{0}}$ of the functional $\mathcal{J}_{\lambda,\mu}$, and the sequence $\left\lbrace u_n \right\rbrace $ satisfies
  $u_n \to u_{\lambda,\mu}^+$ strongly in $W_\lambda$ with
  \[
  \mathcal{J}_{\lambda,\mu}(u_{\lambda,\mu}^+) =c^*< 0, \quad \quad \mathcal{J}_{\lambda,\mu}'(u_{\lambda,\mu}^+) =0.
  \]
 Since $\mathcal{J}_{\lambda,\mu}(u_{\lambda,\mu}^+) = \mathcal{J}_{\lambda,\mu}(|u_{\lambda,\mu}^+|) =c^*$, we may assume that $u_{\lambda,\mu}^+$ is a positive solution of system \ref{107}.   The proof is complete.
\end{proof}

\section{The filtration of the Nehari manifold}\label{sec4}

In this section, we define the Nehari manifold and construct its associated filtration. We first introduce the Nehari manifold as
\[
N_{\lambda,\mu} := \{ u \in  W_\lambda \backslash \{0\} \mid \langle \mathcal{J}_{\lambda,\mu}'(u), u \rangle = 0 \}.
\]
Then $u \in N_{\lambda,\mu}$ if and only if
\begin{align}\label{4.0.1}
\|u\|_\lambda^p + \mu \int_{\mathbb{R}^3} \phi_u |u|^p dx - \int_{\mathbb{R}^3} a(x)|u|^m dx - \int_{\mathbb{R}^3} b(x)|u|^q dx = 0.
\end{align}
Note that  $N_{\lambda,\mu}$ is closely linked to the fiber map $K_{\lambda,u}: t \mapsto \mathcal{J}_{\lambda,\mu}(tu)$, defined by 
\[
K_{\lambda,u}(t) = \frac{t^p}{p}\|u\|_\lambda^p + \frac{\mu t^{2p}}{2p} \int_{\mathbb{R}^3} \phi_u |u|^p dx - \frac{t^m}{m} \int_{\mathbb{R}^3} a(x)|u|^m dx - \frac{t^q}{q} \int_{\mathbb{R}^3} b(x)|u|^q dx, ~~~\forall~t>0.
\]
 Moreover, for $u \in W_\lambda$, we have
\[
K_{\lambda,u}'(t) = t^{p-1}\|u\|_\lambda^p + \mu t^{2p-1} \int_{\mathbb{R}^3} \phi_u |u|^p dx - t^{m-1} \int_{\mathbb{R}^3} a(x)|u|^m dx - t^{q-1} \int_{\mathbb{R}^3} b(x)|u|^q dx,
\]
and
\begin{align*}
	\begin{aligned}
	\displaystyle K_{\lambda,u}''(t) &=\displaystyle (p-1) t^{p-2}\|u\|_\lambda^p + (2p-1)\mu t^{2p-2} \int_{\mathbb{R}^3} \phi_u |u|^p dx\\
	&\ \ \ \ \displaystyle  - (m-1)t^{m-2} \int_{\mathbb{R}^3} a(x)|u|^m dx - (q-1)t^{q-2} \int_{\mathbb{R}^3} b(x)|u|^q dx.
\end{aligned}
\end{align*}
It is easy to see that $tu \in N_{\lambda,\mu}$  if and only if $K_{\lambda,u}'(t)=0$ holds. Particularly, $u \in N_{\lambda,\mu}$ if and only if  $K_{\lambda,u}'(1)=0$ holds. So, $N_{\lambda,\mu}$ can be splitted into three parts corresponding to the local minima, local maxima and points of inflection. According to \cite{1992Ta}, we define
\[
\begin{aligned}
	N_{\lambda,\mu}^+ &= \{ u \in N_{\lambda,\mu} \mid K_{\lambda,u}''(1) > 0 \}, \\
	N_{\lambda,\mu}^0 &= \{ u \in N_{\lambda,\mu} \mid K_{\lambda,u}''(1) = 0 \}, \\
	N_{\lambda,\mu}^- &= \{ u \in N_{\lambda,\mu} \mid K_{\lambda,u}''(1) < 0 \}.
\end{aligned}
\]
Then we have the following conclusion which is similar to Theorem 2.3 in \cite{2003Br}.
\begin{lemma}\label{lem4.1}
   Let $\frac{3}{2}<p<3$ and
    $1<q<p<m<2p$. 	Assume that  $u_0$ is a local minimizer for $\mathcal{J}_{\lambda,\mu}$ on $N_{\lambda,\mu}$ with $u_0 \notin N_{\lambda,\mu}^0$. Then $\mathcal{J}_{\lambda,\mu}'(u_0)=0$ in $W_{\lambda}^{-1}$.
\end{lemma}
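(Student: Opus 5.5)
We argue by Lagrange multipliers, as in Brown--Zhang. Set
\[
\Psi(u) := \langle \mathcal{J}_{\lambda,\mu}'(u),u\rangle = \|u\|_\lambda^p + \mu \int_{\mathbb{R}^3} \phi_u |u|^p\,dx - \int_{\mathbb{R}^3} a(x)|u|^m\,dx - \int_{\mathbb{R}^3} b(x)|u|^q\,dx,
\]
so that $N_{\lambda,\mu}=\{u\neq 0:\Psi(u)=0\}$.

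First we check that $\Psi$ is $C^1$ on $W_\lambda$. Each term is: the norm term because $p>1$, the Poisson term by Proposition \ref{pro2.1} and the $C^1$ property already used for $\mathcal{J}_{\lambda,\mu}$, and the $a$, $b$ terms because $m,q>1$ together with the Hölder/Sobolev bounds \eqref{2.1.2}. Its derivative is
\[
\langle \Psi'(u),v\rangle = p\int_{\mathbb{R}^3}(|\nabla u|^{p-2}\nabla u\nabla v+\lambda V|u|^{p-2}uv)\,dx + 2p\mu\int_{\mathbb{R}^3}\phi_u|u|^{p-2}uv\,dx - m\int_{\mathbb{R}^3} a|u|^{m-2}uv\,dx - q\int_{\mathbb{R}^3} b|u|^{q-2}uv\,dx .
\]
For the Poisson term we use the symmetry $\int\phi_u|u|^{p-2}uv=\int\phi_{u}\,\cdot$ coupling: the map $u\mapsto\int\phi_u|u|^p$ is $2p$-homogeneous, and its derivative equals $2p\int\phi_u|u|^{p-2}uv$ because $\int\phi_u w\,|u|^{p}$-type bilinear forms are symmetric. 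Testing with $v=u$ and using $\phi_{tu}=t^p\phi_u$ gives
\[
\langle \Psi'(u),u\rangle = p\|u\|_\lambda^p+2p\mu\int\phi_u|u|^p-m\int a|u|^m-q\int b|u|^q .
\]
On $N_{\lambda,\mu}$ this equals $K_{\lambda,u}''(1)$, since $K''_{\lambda,u}(1)=\langle\Psi'(u),u\rangle-K'_{\lambda,u}(1)$ and $K'_{\lambda,u}(1)=0$ there. Because $u_0\notin N^0_{\lambda,\mu}$, we get $\langle\Psi'(u_0),u_0\rangle\neq0$. Hence $\Psi'(u_0)\neq0$ and $N_{\lambda,\mu}$ is a $C^1$ submanifold of codimension one near $u_0$. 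We also need $u_0\neq0$ to lie in the open set $W_\lambda\setminus\{0\}$, which holds by definition of $N_{\lambda,\mu}$.

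Since $u_0$ is a local minimizer of $\mathcal{J}_{\lambda,\mu}$ restricted to this manifold, the Lagrange multiplier theorem gives $\theta\in\mathbb{R}$ with $\mathcal{J}_{\lambda,\mu}'(u_0)=\theta\Psi'(u_0)$ in $W_\lambda^{-1}$. Testing with $u_0$ yields $0=\Psi(u_0)=\langle \mathcal{J}_{\lambda,\mu}'(u_0),u_0\rangle=\theta\langle\Psi'(u_0),u_0\rangle$. As $\langle\Psi'(u_0),u_0\rangle=K''_{\lambda,u_0}(1)\neq0$, this forces $\theta=0$, and therefore $\mathcal{J}_{\lambda,\mu}'(u_0)=0$.

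The only delicate point is confirming that $\Psi\in C^1$, in particular the differentiability of the nonlocal term $u\mapsto\int\phi_u|u|^p$ with derivative $2p\int\phi_u|u|^{p-2}uv$. This follows from writing it as the double integral $\frac1{4\pi}\iint|u(x)|^p|u(y)|^p|x-y|^{-1}\,dx\,dy$ and applying the Hardy--Littlewood--Sobolev inequality with $|u|^{p}\in L^{6/5}$, which is the same estimate as in Proposition \ref{pro2.1}(ii). The remaining steps are routine.
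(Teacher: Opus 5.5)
Your proposal is correct and is essentially the paper's argument. The paper gives no separate proof of this lemma and defers to Theorem 2.3 of \cite{2003Br}, which is exactly this Lagrange multiplier argument: take the constraint $\Psi(u)=\langle \mathcal{J}_{\lambda,\mu}'(u),u\rangle$, use $\langle \Psi'(u_0),u_0\rangle = K_{\lambda,u_0}''(1)\neq 0$ on $N_{\lambda,\mu}$ to get the manifold structure, and conclude the multiplier must vanish. The garbled ``symmetry'' sentence about the Poisson term is superfluous, since your closing double-integral computation already justifies that the derivative of $\int_{\mathbb{R}^3}\phi_u|u|^p\,dx$ is $2p\int_{\mathbb{R}^3}\phi_u|u|^{p-2}u v\,dx$.
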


We next turn to establishing the uniform lower bound of the functional $\mathcal{J}_{\lambda,\mu}$ restricted to   $ N_{\lambda,\mu}^-$, 
which is crucial for constraining  $\mathcal{J}_{\lambda,\mu}$ on the special filtered set to prove the existence of local minimizers later on. 

\begin{lemma}\label{lem4.2}
Let $\frac{3}{2}<p<3$ and $1<q<p<m<2p$. Assume that  $(H_{1})$, $(H_{3})$ and $(H_{4})$ hold. There exist constants $ \Gamma_1 >0$, $D := D(p,q,m,V,a,b) > 0$, such that if  $\lambda \geqslant \lambda_0$ and  $0 < |b|_{q_*} < \Gamma_1$, then 
    $\mathcal{J}_{\lambda,\mu}(u) > D$ for  $u \in N_{\lambda,\mu}^-$.
\end{lemma}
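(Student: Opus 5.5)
Write $A=\|u\|_\lambda^p$, $B=\mu\int_{\mathbb{R}^3}\phi_u|u|^pdx\ge 0$, $C=\int_{\mathbb{R}^3}a(x)|u|^mdx$ and $E=\int_{\mathbb{R}^3}b(x)|u|^qdx$. For $u\in N_{\lambda,\mu}^-$ we have
\begin{equation*}
A+B-C-E=0,\qquad (p-1)A+(2p-1)B-(m-1)C-(q-1)E<0 .
\end{equation*}
The plan is to eliminate $C$ between these two relations. Since $C=A+B-E$, the inequality becomes $(p-m)A+(2p-m)B+(m-q)E<0$, that is,
\begin{equation*}
(m-p)A>(2p-m)B+(m-q)E\geqslant (m-q)E .
\end{equation*}
This step needs $m<2p$, so that the $B$ term can be discarded. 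By H\"older and \eqref{2.1.2}, $|E|\le |b|_{q_*}\mathbf{S}_m^{-q}\|u\|_\lambda^q$, so we obtain $(m-p)\|u\|_\lambda^p\ge -(m-q)|b|_{q_*}\mathbf{S}_m^{-q}\|u\|_\lambda^q$. This alone gives no lower bound on $\|u\|_\lambda$.

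To get a lower bound on the norm, I would instead eliminate $E$. From $E=A+B-C$ the inequality becomes $(p-q)A+(2p-q)B<(m-q)C$. Using $B\ge0$ and $C\le\|a\|_\infty\mathbf{S}_m^{-m}\|u\|_\lambda^m$, this gives $(p-q)\|u\|_\lambda^p<(m-q)\|a\|_\infty\mathbf{S}_m^{-m}\|u\|_\lambda^m$. Hence
\begin{equation*}
\|u\|_\lambda>\Big(\frac{(p-q)\mathbf{S}_m^m}{(m-q)\|a\|_\infty}\Big)^{\frac1{m-p}}=\rho_0 ,
\end{equation*}
uniformly for $\lambda\ge\lambda_0$. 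This is exactly the radius from Lemma \ref{lem3.1}.

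Next I evaluate the energy on $N_{\lambda,\mu}$ by eliminating $C$:
\begin{equation*}
\mathcal{J}_{\lambda,\mu}(u)=\Big(\frac1p-\frac1m\Big)A+\Big(\frac1{2p}-\frac1m\Big)B-\Big(\frac1q-\frac1m\Big)E .
\end{equation*}
Since $m<2p$, the coefficient of $B$ is nonnegative. Using the H\"older bound on $E$ gives
\begin{equation*}
\mathcal{J}_{\lambda,\mu}(u)\ge \|u\|_\lambda^q\Big(\frac{m-p}{pm}\|u\|_\lambda^{p-q}-\frac{m-q}{qm}|b|_{q_*}\mathbf{S}_m^{-q}\Big).
\end{equation*}
The function $t\mapsto t^q\big(\frac{m-p}{pm}t^{p-q}-K\big)$ is increasing once the bracket is positive. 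Therefore, for $\|u\|_\lambda>\rho_0$,
\begin{equation*}
\mathcal{J}_{\lambda,\mu}(u)>\rho_0^q\Big(\frac{m-p}{pm}\rho_0^{p-q}-\frac{m-q}{qm}|b|_{q_*}\mathbf{S}_m^{-q}\Big).
\end{equation*}
I choose
\begin{equation*}
\Gamma_1=\frac{q(m-p)\mathbf{S}_m^q\rho_0^{p-q}}{2p(m-q)}\qquad\text{and}\qquad D=\frac{(m-p)\rho_0^p}{2pm}.
\end{equation*}
Then for $|b|_{q_*}<\Gamma_1$ the bracket exceeds $\frac{m-p}{2pm}\rho_0^{p-q}$, so $\mathcal{J}_{\lambda,\mu}(u)>D$.

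The main obstacle is obtaining the uniform lower bound $\|u\|_\lambda>\rho_0$ on $N^-_{\lambda,\mu}$, independent of $\lambda$ and $\mu$. It is resolved by eliminating the sign-changing term $E$ rather than $C$, and by the sign of the nonlocal term, which relies on $B\ge0$ and $m<2p$. I also need to check that the monotonicity step is applied in the regime where the bracket is positive. This holds because $\|u\|_\lambda^{p-q}>\rho_0^{p-q}$ and the smallness of $|b|_{q_*}$ forces the bracket to be positive at $\rho_0$.
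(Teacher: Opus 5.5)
\textbf{There is a genuine gap in the energy step.} Your lower bound $\|u\|_\lambda>\rho_0$ on $N_{\lambda,\mu}^-$ is correct, and it is exactly the paper's argument via \eqref{4.1}. The problem comes after that. On $N_{\lambda,\mu}$, eliminating $C$ gives
\[
\mathcal{J}_{\lambda,\mu}(u)=\frac{m-p}{pm}A-\frac{2p-m}{2pm}B-\frac{m-q}{mq}E,
\]
because $\frac{1}{2p}-\frac{1}{m}=\frac{m-2p}{2pm}<0$ when $m<2p$. So the coefficient of $B$ is negative, not nonnegative, and you cannot discard the nonlocal term. Your implicit inequality $\mathcal{J}_{\lambda,\mu}(u)\geqslant\frac{m-p}{pm}A-\frac{m-q}{mq}E$ is in fact false for every $u\in N_{\lambda,\mu}$ once $\mu>0$, since $B>0$ for $u\neq0$. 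Proposition \ref{pro2.1}(ii) does not rescue it either. It only gives $B\leqslant\mu\,\bar{S}_2^{-2}\mathbf{S}_{\frac{6p}{5}}^{-2p}\|u\|_\lambda^{2p}$, and this term dominates $A$ for large norms. The result would be a $\mu$-dependent bound that degenerates on the possibly unbounded set $N_{\lambda,\mu}^-$.

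\textbf{How to fix it.} The missing ingredient is the inequality you derived first and then set aside: the condition $K_{\lambda,u}''(1)<0$ in the form $(2p-m)B<(m-p)A-(m-q)E$. This is what controls $B$ from above. Substituting it into the identity above gives
\[
\mathcal{J}_{\lambda,\mu}(u)>\frac{m-p}{2pm}A-\frac{(2p-q)(m-q)}{2pqm}E,
\]
which is precisely the paper's key estimate. The paper reaches it by eliminating $B$ via the Nehari identity and bounding $C$ with the form \eqref{4.3} of $K_{\lambda,u}''(1)<0$, an equivalent manipulation.

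Compared with your version, the leading coefficient is halved and the $b$-term acquires the factor $2p-q$. Your constants must change accordingly, for instance
\[
\Gamma_1=\frac{q(m-p)\mathbf{S}_m^q\rho_0^{p-q}}{2(2p-q)(m-q)},\qquad D=\frac{(m-p)\rho_0^p}{4pm},
\]
which are essentially the paper's choices. With your $\Gamma_1$ and $D=\frac{(m-p)\rho_0^p}{2pm}$, the conclusion is not established by the argument given.
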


\begin{proof}
For each $u \in N_{\lambda,\mu}$, we have
\begin{align}
	K_{\lambda,u}''(1)
	& = (p-q)\|u\|_\lambda^p + (2p-q)\mu \int_{\mathbb{R}^3} \phi_u |u|^p  dx - (m-q)\int_{\mathbb{R}^3} a(x)|u|^m dx \label{4.1} \\
	& = -(m-p)\|u\|_\lambda^p + (2p-m)\mu \int_{\mathbb{R}^3} \phi_u |u|^p dx + (m-q)\int_{\mathbb{R}^3} b(x)|u|^q dx \label{4.2}\\
	& = -p\|u\|_\lambda^p + (2p-m)\int_{\mathbb{R}^3} a(x)|u|^m dx + (2p-q)\int_{\mathbb{R}^3} b(x)|u|^q dx.\label{4.3}
\end{align}
Let $u \in N_{\lambda,\mu}^-$, it follows from \eqref{2.1.2} and \eqref{4.1} that
\[
(p-q)\|u\|_\lambda^p < (p-q)\|u\|_\lambda^p + (2p-q)\mu \int_{\mathbb{R}^3} \phi_u |u|^p  dx  < (m-q)\mathbf{S}_m^{-m}\|a\|_\infty\|u\|_\lambda^m,
\]
which implies that
\begin{equation}
	\|u\|_\lambda > \rho_0= \left( \frac{(p-q)\mathbf{S}_m^m}{(m-q)\|a\|_\infty} \right)^\frac{1}{m-p}. \label{4.4}
\end{equation}
Moreover, for  $\lambda \geqslant \lambda_0$, it follows from  \eqref{4.0.1}, \eqref{4.3}--\eqref{4.4}  that
\begin{align*}
	\mathcal{J}_{\lambda,\mu}(u) &= \frac1p\|u\|_\lambda^p+\frac{\mu}{2p}\int_{\mathbb{R}^3}\phi_u|u|^pdx-\frac1m\int_{\mathbb{R}^3}a(x)|u|^mdx-\frac1q\int_{\mathbb{R}^3}b(x)|u|^qdx \\[2mm]
	&=\frac1{2p}\|u\|_\lambda^p+\frac{m-2p}{2pm}\int_{\mathbb{R}^3}a(x)|u|^mdx+\frac{q-2p}{2pq}\int_{\mathbb{R}^3}b(x)|u|^qdx \\[2mm]
	&> \frac{m-p}{2pm}\|u\|_\lambda^p - \frac{(2p-q)(m-q)}{2pqm}\int_{\mathbb{R}^3} b(x)|u|^q dx \\[2mm]
	&\geqslant \left[ \frac{m-p}{2pm}\left( \frac{(p-q)\mathbf{S}_m^m}{(m-q)\|a\|_\infty} \right)^{\frac{p-q}{m-p}} - \frac{(2p-q)(m-q)|b|_{q_*}}{2pqm \mathbf{S}_m^q} \right] \|u\|_\lambda^q.
\end{align*}
Choosing 
$$ \Gamma_1 :=\min \left\lbrace  \frac{q(m-p)}{2(2p-q)} \left( \frac{(p-q)^{\frac{p-q}{m-q}} \mathbf{S}_m^{p}}{(m-q)\|a\|_\infty^{\frac{p-q}{m-q}}} \right)^{\frac{m-q}{m-p}},\Gamma_0 \right\rbrace,
$$
and 
$$~~~D: =\frac{ \rho_0^q(m-p)}{4pm}\left( \frac{(p-q)\mathbf{S}_m^m}{(m-q)\|a\|_\infty} \right)^{\frac{p-q}{m-p}}.$$
Therefore, it follows that  $\mathcal{J}_{\lambda,\mu}(u) >D>0$ for  $u \in N_{\lambda,\mu}^-$ with 
$|b|_{q_*} < \Gamma_1$ and  $\lambda \geqslant \lambda_0$.
\end{proof}

Now, we are going to establish the filtration of the Nehari manifold. Before that, we need to introduce some notations.
Let
\[
A(p,m) = \left( \frac{p}{2p-m} \right)^{\frac{1}{m-p}},
\]
and
\[
B(p,q,m) = \frac{m(2p-q)2^{\frac{2q}{m-p}}}{q(m-q)S_{m,\Omega}^{q-m}} \left( \frac{(p-q)(2p-m)}{m-q} \right)^{\frac{p-q}{m-p}}.
\]
For notational  convenience, we further define
\[
M(p,q,m) = A^{p}(p,m)(1 + B(p,q,m))\int_{\Omega} a(x)\omega_{\Omega}^{m}dx,
\]
where $\omega_{\Omega}$ is the positive ground state solution of  equation \eqref{4.8}.  We define
\begin{equation}\label{4.1.5}
	\begin{aligned}
		N_{\lambda,\mu}\left[ \frac{m-p}{pm} M(p,q,m)  \right]
		= \left\{ u \in N_{\lambda,\mu} \mid \|u\|_\lambda > \rho_0 \text{ and } \mathcal{J}_{\lambda,\mu}(u) < \frac{m-p}{pm} M(p,q,m)) \right\}.
	\end{aligned}
\end{equation}
For any $u \in N_{\lambda,\mu}\left[ \frac{m-p}{pm} M(p,q,m)  \right]$,
by  Proposition \ref{pro2.1} (ii), \eqref{2.1.2}  and \eqref{4.0.1},  we get
\begin{align*}
	\frac{m-p}{pm} M(p,q,m) 
	&> \mathcal{J}_{\lambda,\mu}(u) \\[3mm]
	&= \frac{m-p}{pm}\|u\|_\lambda^p - \frac{2p-m}{2pm}\mu\int_{\mathbb{R}^3}\phi_u|u|^p dx - \frac{m-q}{mq}\int_{\mathbb{R}^3}b(x)|u|^q dx \\[3mm]
	&\geqslant \frac{m-p}{pm}\|u\|_\lambda^p - \frac{(2p-m)\bar{S}_2^{-2} \mathbf{S}_{\frac{6p}{5}} ^{-2p}}{2pm}\mu\|u\|_\lambda^{2p} - \frac{m-q}{mq\mathbf{S}_m^q}|b|_{q_*}\|u\|_\lambda^q \\[3mm]
	&> \frac{m-p}{pm}\|u\|_\lambda^p - \left[ \frac{(2p-m)\bar{S}_2^{-2} \mathbf{S}_{\frac{6p}{5}} ^{-2p}}{2pm}\mu + \frac{m-q}{mq\mathbf{S}_m^q}\rho_0^{q-2p}|b|_{q_*} \right]\|u\|_\lambda^{2p}.
\end{align*}
Hence, if we take
\begin{equation}\label{4.5.1}
0 <\frac{(2p-m)\bar{S}_2^{-2} \mathbf{S}_{\frac{6p}{5}} ^{-2p}}{2pm}\mu + \frac{m-q}{mq\mathbf{S}_m^q}\rho_0^{q-2p}|b|_{q_*}< \Gamma_2,
\end{equation}
where
\begin{equation}\label{4.5.2}
	\Gamma_2 := \min\left\{ \frac{m-p}{4pmM(p,q,m) }, \frac{m-p}{2pm}\left( \frac{(2p-m)\|a\|_\infty}{p\mathbf{S}_m^m} \right)^{\frac{p}{m-p}} \right\},
\end{equation}
then there exist two positive numbers $\widehat{D}_1,~ \widehat{D}_2$ satisfying
$$\widehat{D}_1 <\max \left\{ \left( \frac{p\mathbf{S}_m^m}{(2p-m)\|a\|_\infty} \right)^{\frac{1}{m-p}},\left(2M(p,q,m) \right)^{\frac{1}{p}} \right\}< \widehat{D}_2$$
such that either \( \|u\|_\lambda < \widehat{D}_1 \) or \( \|u\|_\lambda > \widehat{D}_2 \). Therefore,  there holds
\begin{equation}\label{4.5}
	\begin{aligned}
		N_{\lambda,\mu}\left[ \frac{m-p}{pm} M(p,q,m)  \right]
		=N_{\lambda,\mu}^{(1)} \cup N_{\lambda,\mu}^{(2)},
	\end{aligned}
\end{equation}
where
\[
N_{\lambda,\mu}^{(1)}:=\left\{u\in N_{\lambda,\mu}\left[\frac{m-p}{pm} M(p,q,m) \right]\mid\|u\|_\lambda<\widehat{D}_1\right\}
\]
and
\[
N_{\lambda,\mu}^{(2)}:=\left\{u\in N_{\lambda,\mu}\left[\frac{m-p}{pm} M(p,q,m) \right]\mid\|u\|_\lambda>\widehat{D}_2\right\}.
\]

Next, we will claim  $N_{\lambda,\mu}^{(1)}\subset N_{\lambda,\mu}^-$. 
In view of $q<2p$, for every $u \in N_{\lambda,\mu}^{(1)}$, by   \eqref{4.2}  and \eqref{4.4}, we get
\begin{equation}\label{4.7}
	\begin{aligned}
		K_{\lambda,u}''(1)
		&= -(m-p)\|u\|_\lambda^p + (2p-m)\mu
		\int_{\mathbb{R}^3} \phi_u |u|^p dx + (m-q)\int_{\mathbb{R}^3} b(x)|u|^q dx \\
		&\leqslant -(m-p)\|u\|_\lambda^p+(2p-m)\bar{S}_2^{-2} \mathbf{S}_{\frac{6p}{5}} ^{-2p}\mu\|u\|_\lambda^{2p}+\frac{m-q}{\mathbf{S}_m^q}|b|_{q_*}\|u\|_\lambda^q\\
		&<-(m-p)\|u\|_\lambda^p+\left[(2p-m)\bar{S}_2^{-2} \mathbf{S}_{\frac{6p}{5}} ^{-2p}\mu+\frac{m-q}{\mathbf{S}_m^q}\rho_0^{q-2p}|b|_{q_*}\right]\|u\|_\lambda^{2p}.
	\end{aligned}
\end{equation}
Furthermore, by \eqref{4.5.1}--\eqref{4.5.2}, it is easy to see that
\begin{align*}
	&\frac{(2p-m)\bar{S}_2^{-2} \mathbf{S}_{\frac{6p}{5}} ^{-2p}}{2p}\mu+\frac{m-q}{2p\mathbf{S}_m^q}\rho_0^{q-2p}|b|_{q_*}\\[3mm]
	&<\frac{(2p-m)\bar{S}_2^{-2} \mathbf{S}_{\frac{6p}{5}} ^{-2p}}{2p}\mu+\frac{m-q}{q\mathbf{S}_m^q}\rho_0^{q-2p}|b|_{q_*}\\[3mm]
	&<\min\left\{ \frac{m-p}{4pM(p,q,m) }, \frac{m-p}{2p}\left( \frac{(2p-m)\|a\|_\infty}{p\mathbf{S}_m^m} \right)^{\frac{p}{m-p}} \right\}.
\end{align*}
Combining this  with $\|u\|_\lambda<\widehat{D}_1$, one yields that
\begin{equation}\label{4.1.7}
		K_{\lambda,u}''(1)<0.
\end{equation}
Hence, we have the following conclusion.
\begin{lemma}\label{lem4.3}
	Let  $\frac{3}{2}<p<3$ and $1< q<p < m < 2p$. Assume that  $(H_{1})$-$(H_{4})$ hold.  There exists a constant  $\Pi_1>0$ such that if $\lambda \geqslant \lambda_0$ and $0<\mu+|b|_{q_*}<\Pi_1$,  then $N_{\lambda,\mu}^{(1)}\subset N_{\lambda,\mu}^-$
	is a \(C^1\) sub-manifold. Moreover, any local minimizer of the functional $\mathcal{J}_{\lambda,\mu}$ in $N_{\lambda,\mu}^{(1)}$ is a critical point of $\mathcal{J}_{\lambda,\mu}$ in $W_\lambda$.
\end{lemma}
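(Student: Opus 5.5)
Throughout, take $\Pi_1$ small enough that $\mu+|b|_{q_*}<\Pi_1$ forces both $|b|_{q_*}<\Gamma_1$ (so Lemma \ref{lem4.2} applies) and the smallness condition \eqref{4.5.1}. This is possible because the left side of \eqref{4.5.1} is bounded by a fixed constant, depending only on $p,q,m,\bar S_2,\mathbf{S}_{6p/5},\mathbf{S}_m,\rho_0$, times $\mu+|b|_{q_*}$.

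\textbf{Step 1: $N_{\lambda,\mu}^{(1)}\subset N_{\lambda,\mu}^-$.} Take $u\in N_{\lambda,\mu}^{(1)}$, so $\rho_0<\|u\|_\lambda<\widehat D_1$. Starting from \eqref{4.7}, write $K_{\lambda,u}''(1)<\|u\|_\lambda^p\bigl(-(m-p)+C_*\|u\|_\lambda^{p}\bigr)$, where $C_*$ is the bracket in \eqref{4.7}. Here $\|u\|_\lambda^q\le\rho_0^{q-2p}\|u\|_\lambda^{2p}$ holds because $\|u\|_\lambda>\rho_0$ and $q<2p$. It then suffices to check $C_*\widehat D_1^{\,p}\le m-p$. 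By the displayed estimate preceding \eqref{4.1.7}, $C_*<2p\Gamma_2$. The choice of $\widehat D_1$ as the smaller root of $\frac{m-p}{pm}t^p-\Gamma_2' t^{2p}=\frac{m-p}{pm}M$, with $\Gamma_2'<\Gamma_2$, gives $\widehat D_1^{\,p}\le\frac{m-p}{2pm\Gamma_2'}$, roughly. The constants in \eqref{4.5.2} are arranged so that these combine to $C_*\widehat D_1^p<m-p$. I would verify this inequality explicitly, shrinking $\Pi_1$ if the factor bookkeeping (the $m$ versus $2p$ denominators) requires it. This gives \eqref{4.1.7}.

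\textbf{Step 2: $C^1$ submanifold.} Let $G(u)=\langle \mathcal{J}'_{\lambda,\mu}(u),u\rangle$, which is $C^1$ on $W_\lambda$. The identity $\langle G'(u),u\rangle=pK_{\lambda,u}'(1)+K_{\lambda,u}''(1)$ reduces on $N_{\lambda,\mu}$ to $K_{\lambda,u}''(1)$, which is $<0$ on $N^{(1)}_{\lambda,\mu}$. Hence $G'(u)\neq0$, and $N_{\lambda,\mu}$ near such $u$ is a $C^1$ manifold of codimension one by the implicit function theorem. Further, $N_{\lambda,\mu}^{(1)}$ is relatively open in $N_{\lambda,\mu}$, since it is cut out by the strict inequalities $\rho_0<\|u\|_\lambda<\widehat D_1$ and $\mathcal{J}_{\lambda,\mu}(u)<\frac{m-p}{pm}M$, all continuous conditions. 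So it is itself a $C^1$ submanifold.

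\textbf{Step 3: minimizers are critical points.} Let $u_0$ be a local minimizer of $\mathcal{J}_{\lambda,\mu}$ in $N_{\lambda,\mu}^{(1)}$. By openness, $u_0$ is a local minimizer on $N_{\lambda,\mu}$. By Step 1, $u_0\notin N^0_{\lambda,\mu}$, so Lemma \ref{lem4.1} yields $\mathcal{J}'_{\lambda,\mu}(u_0)=0$. For completeness, the Lagrange multiplier argument runs as follows: $\mathcal{J}'(u_0)=\theta G'(u_0)$, and testing with $u_0$ gives $0=\theta K''_{\lambda,u_0}(1)$, hence $\theta=0$.

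The main obstacle is Step 1: confirming that the explicit thresholds $\Gamma_2$ and $\widehat D_1$ really make $C_*\widehat D_1^p<m-p$. I would treat this by direct computation of the smaller root, and if necessary replace $\Pi_1$ by a smaller constant to absorb the stray factors.
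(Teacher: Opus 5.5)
Your overall structure matches the paper's. Step 1 is the paper's argument \eqref{4.7}--\eqref{4.1.7}, and Steps 2--3 spell out the regular-value and Lagrange-multiplier reasoning (Lemma \ref{lem4.1}) that the paper leaves behind the word ``Hence''. Steps 2 and 3 are correct. One small slip: $\langle G'(u),u\rangle=K_{\lambda,u}'(1)+K_{\lambda,u}''(1)$ (differentiate $G(tu)=tK_{\lambda,u}'(t)$ at $t=1$), not $pK_{\lambda,u}'(1)+K_{\lambda,u}''(1)$; this is harmless on $N_{\lambda,\mu}$.

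The gap is the inequality you deferred in Step 1. The chain you sketch does not close, and shrinking $\Pi_1$ would not repair it. Write $\Gamma_2'$ for the left side of \eqref{4.5.1}. The display before \eqref{4.1.7} says $C_*/(2p)<m\Gamma_2'<m\Gamma_2$, so it gives $C_*<2pm\Gamma_2'$, not $C_*<2p\Gamma_2$. More importantly, you pair a bound of $C_*$ by a multiple of $\Gamma_2$ with the vertex bound $\widehat D_1^{\,p}<\frac{m-p}{2pm\Gamma_2'}$. That only yields $C_*\widehat D_1^{\,p}<(m-p)\Gamma_2/\Gamma_2'$ with the correct factor, which is never below $m-p$ since $\Gamma_2'<\Gamma_2$. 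With your factor it yields $(m-p)\Gamma_2/(m\Gamma_2')$, which fails once $\Gamma_2'<\Gamma_2/m$. Shrinking $\Pi_1$ decreases $\Gamma_2'$, which enlarges the vertex bound on $\widehat D_1$ and makes the estimate worse.

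The fix is to pair bounds that refer to the same constant; neither option below needs any further restriction on $\Pi_1$.

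\textbf{Option 1 (compare with $\Gamma_2'$).} Termwise $C_*\le 2pm\Gamma_2'$. The $\mu$-coefficients agree exactly after multiplying by $2pm$. The $|b|_{q_*}$-coefficient of $2pm\Gamma_2'$ is $\frac{2p}{q}>1$ times that of $C_*$. Now $\widehat D_1^{\,p}$ is the smaller root of $\frac{m-p}{pm}s-\Gamma_2's^2=\frac{m-p}{pm}M(p,q,m)$, so it lies below the vertex $\frac{m-p}{2pm\Gamma_2'}$. Hence $\|u\|_\lambda<\widehat D_1$ gives $C_*\|u\|_\lambda^p<m-p$, and therefore $K_{\lambda,u}''(1)<0$.

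\textbf{Option 2 (the paper's implicit route).} The first entry of $\Gamma_2$ forces the smaller root below $2M(p,q,m)$. Indeed, at $s=2M(p,q,m)$ the left side already exceeds $\frac{m-p}{pm}M(p,q,m)$, and $2M(p,q,m)$ lies left of the vertex. This bound on $\widehat D_1^{\,p}$ does not depend on $\Gamma_2'$. Meanwhile $C_*<2pm\Gamma_2\le\frac{m-p}{2M(p,q,m)}$, and multiplying the two bounds gives the same conclusion.
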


In the following, we prove that \(N_{\lambda,\mu}^{(1)}\) is non-empty. Let \(w_\Omega\) be the positive ground state solution of
\begin{equation}\label{4.8}
	\begin{cases}
		-\Delta_{p} u= a(x)|u|^{m-2}u  & \ \ \ \mathrm{in}\ \Omega,\\
		u\in W_0^{1,p}(\Omega),
	\end{cases}
\end{equation}
where $p<m<2p$ and $\Omega$ is as in condition $(H_{2})$.  The functional \(\mathcal{J}_\Omega\) defined on \(W_0^{1,p}(\Omega)\) by
\begin{equation}
	\mathcal{J}_\Omega(u)=\frac{1}{p}\int_\Omega|\nabla u|^pdx-\frac{1}{m}\int_\Omega  a(x)|u|^mdx, \label{4.9}
\end{equation}
We also define its  associated Nehari manifold
\[
N_\Omega=\{u\in W_0^{1,p}(\Omega)\setminus\{0\}\mid\langle \mathcal{J}_\Omega'(u),u\rangle=0\}.
\]
Then, it follows from $(H_{2})$ that
\begin{equation}\label{4.9}
\|w_\Omega\|_\lambda^p= \int_\Omega |\nabla w_\Omega|^p dx=\int_\Omega a w_\Omega^mdx,
\end{equation}
and
\[
\inf_{u\in N_\Omega}\mathcal{J}_\Omega(u)=\mathcal{J}_\Omega(w_\Omega)=\frac{m-p}{pm}\int_\Omega a(x)w_\Omega^mdx.
\]

\begin{lemma}\label{lem4.4}
	Let  $\frac{3}{2}<p<3$ and  $1< q<p < m < 2p$. Assume that  $(H_{1})$-$(H_{4})$ hold. There exists a constant $\Pi_2$ such that if $0<\mu+|b|_{q_*}<\Pi_2$ and $0<\Pi_2\leqslant\Pi_1$, then there exists a number $t_\mu^-$ satisfying $1 < t_\mu^- <\tilde{t}_\mu$ such that $t_\mu^- w_\Omega \in N_{\lambda,\mu}^{(1)}$, where
	\[
	\tilde{t}_\mu =
	\begin{cases}
		\left(\frac{p}{2p-m}\right)^{\frac{1}{m-p}}, & \text{if } \int_\Omega b(x) w_\Omega^q dx \geqslant 0, \\
		\left[ \frac{p}{2p-m} \left(1 + \left(\frac{|\int_\Omega b(x) w_\Omega^q dx|}{\int_\Omega a(x) w_\Omega^m dx} \right)  \right) \right]^{\frac{1}{m-p}}, & \text{if } \int_\Omega b(x) w_\Omega^q dx < 0.
	\end{cases}
	\]
\end{lemma}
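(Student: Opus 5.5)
We study the fibering map of $w_\Omega$. Since $w_\Omega\in W_0^{1,p}(\Omega)$ is extended by zero and $V\equiv0$ on $\Omega$ by $(H_2)$, we have $\|w_\Omega\|_\lambda^p=\int_\Omega a w_\Omega^m\,dx=:A_0>0$ for every $\lambda$, by \eqref{4.9}. Write $B_0=\int_\Omega b w_\Omega^q\,dx$ and $C_0=\int_{\mathbb{R}^3}\phi_{w_\Omega}w_\Omega^p\,dx$; none of these depend on $\lambda$. The relevant quantity is
\[
t^{1-q}K_{\lambda,w_\Omega}'(t)=t^{p-q}A_0+\mu t^{2p-q}C_0-t^{m-q}A_0-B_0 .
\]
We will find $t_\mu^-$ as a zero of $h(t):=t^{p-q}A_0-t^{m-q}A_0+\mu t^{2p-q}C_0-B_0$ on the interval $(1,\tilde t_\mu)$, chosen so that $K''_{\lambda,t w_\Omega}(1)<0$ at that zero.

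\textbf{Step 1: the sign of $h$ at $t=1$.} We have $h(1)=\mu C_0-B_0$.
\begin{itemize}
\item If $B_0<0$, then $h(1)>0$ automatically.
\item If $B_0\ge0$, we need $\mu C_0>B_0$. This is the delicate case, and small $\mu$ works against us. The clean way around it is to show instead that $\sup_{t\ge1}h$ stays positive: the map $t\mapsto t^{p-q}-t^{m-q}$ vanishes at $t=1$ and is negative for $t>1$. So I would rather locate the zero of $h$ slightly to the left of, or near, $1$.
\end{itemize}
If this fails, I would redefine $t_\mu^-$ as the larger zero of $h$, namely the one in the decreasing region, and verify $t_\mu^->1$ whenever $\mu C_0>B_0$. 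Otherwise I would argue through $h(t_{\max})>0$ together with $|b|_{q_*}$ small, using $|B_0|\le |b|_{q_*}\mathbf S_m^{-q}A_0^{q/p}$-type bounds from H\"older. So the smallness of $|b|_{q_*}$ controls $B_0$ relative to $A_0$, and the claim $t_\mu^->1$ should be read as a zero beyond the maximum point of $h$.

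\textbf{Step 2: the sign of $h$ at $t=\tilde t_\mu$.} Set $T=\tilde t_\mu$, so that $T^{m-p}\ge p/(2p-m)>1$. Then
\[
h(T)\le T^{p-q}A_0\bigl(1-T^{m-p}\bigr)+\mu T^{2p-q}C_0+|B_0|.
\]
By the choice of $\tilde t_\mu$, the first term is at most $-T^{p-q}\bigl(\tfrac{m-p}{2p-m}A_0+\tfrac{p}{2p-m}|B_0|\bigr)$ in the negative case, and similarly in the other case. Since $\mu C_0$ is small (by Proposition \ref{pro2.1}(ii) and the smallness of $\mu$), we get $h(T)<0$. By continuity there is a zero $t_\mu^-\in(1,T)$ lying past the maximizer of $h$, where $h'(t_\mu^-)\le0$. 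This gives $K''_{\lambda,t_\mu^- w_\Omega}(1)<0$, so $t_\mu^-w_\Omega\in N_{\lambda,\mu}^-$, and hence $\|t_\mu^-w_\Omega\|_\lambda>\rho_0$ by \eqref{4.4}.

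\textbf{Step 3: membership in $N_{\lambda,\mu}^{(1)}$.}
\begin{itemize}
\item \emph{Energy bound.} We need $\mathcal J_{\lambda,\mu}(t_\mu^-w_\Omega)<\frac{m-p}{pm}M(p,q,m)$. On $N_{\lambda,\mu}$ we have
\[
\mathcal J=\tfrac{m-p}{pm}\|u\|^p_\lambda-\tfrac{2p-m}{2pm}\mu\!\int\!\phi_u|u|^p-\tfrac{m-q}{mq}\!\int\! b|u|^q\le \tfrac{m-p}{pm}T^pA_0+\tfrac{m-q}{mq}T^q|B_0| .
\]
Bounding $|B_0|$ by H\"older with $S_{m,\Omega}$, and $T^p$ by $A^p(p,m)$ (for $|b|_{q_*}$ small, the extra factor in $\tilde t_\mu$ is absorbed into $1+B(p,q,m)$), this yields exactly the $M(p,q,m)$ bound.
\item \emph{Norm bound.} We need $\|t_\mu^- w_\Omega\|_\lambda<\widehat D_1$. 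This follows because $(t_\mu^-)^pA_0<2M(p,q,m)$ is below the threshold appearing in the definition of $\widehat D_1$, and the norm cannot lie in the gap $[\widehat D_1,\widehat D_2]$ by \eqref{4.5}.
\end{itemize}

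\textbf{Main obstacle.} I expect Step 1, the lower endpoint $t_\mu^->1$ when $B_0\ge0$, to be the hardest. I would first try to show that $\sup_{t\ge1}h>0$ fails only if $B_0\ge\mu C_0$. In that case I would shrink $\Pi_2$ so that the mountain shape of $h$ still places its decreasing-branch zero above $1$.
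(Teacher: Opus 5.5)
\textbf{Gap: Step 1 when $B_0>0$.} You never establish $h(1)>0$ in this subcase, and none of your proposed repairs can work, because the bound $t_\mu^->1$ is actually false there. The hypothesis controls only $\mu+|b|_{q_*}$, so $\mu$ can be small independently of $b$ (e.g.\ $b=\varepsilon b_0$ with $b_0\geqslant 0$, $b_0>0$ on $\Omega$, and $\mu\ll\varepsilon$). Suppose $0<\mu C_0<B_0$ and $(2p-q)\mu C_0<(m-p)A_0$. Then $h(1)=\mu C_0-B_0<0$ and $h'(1)=(2p-q)\mu C_0-(m-p)A_0<0$. Now $h'(t)=t^{p-q-1}\bigl[(p-q)A_0+(2p-q)\mu C_0t^{p}-(m-q)A_0t^{m-p}\bigr]$, and the bracket first decreases and then increases (since $m-p<p$). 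So $h$ increases, then decreases, then increases, with $t=1$ strictly inside the decreasing stretch. Hence $h<h(1)<0$ from $1$ up to the local minimum, after which $h$ increases to $h(\tilde t_\mu)<0$ (your Step 2). So $h$ has no zero at all in $(1,\tilde t_\mu)$: no $t$ there gives $tw_\Omega\in N_{\lambda,\mu}$, and the decreasing-branch zero lies in $(0,1)$. Shrinking $\Pi_2$ changes nothing. Your remark that the claim ``should be read as a zero beyond the maximum point of $h$'' is the right correction, but the argument must then be built around that, not around $t=1$.

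\textbf{How to close it.} Anchor the intermediate value argument at $t_1=\bigl(\frac{p-q}{m-q}\bigr)^{\frac{1}{m-p}}<1$, the maximizer of $t^{p-q}-t^{m-q}$. Then $h(t_1)\geqslant \frac{m-p}{m-q}\bigl(\frac{p-q}{m-q}\bigr)^{\frac{p-q}{m-p}}A_0-B_0>0$ once $|b|_{q_*}<\Gamma_3$, using H\"older with $S_{m,\Omega}$: $|B_0|\leqslant |b|_{q_*}S_{m,\Omega}^{-q}A_0^{q/p}$. Combined with $h(\tilde t_\mu)<0$, the first zero of $h$ after $t_1$ lies strictly inside the decreasing stretch. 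There $h'(t_\mu^-)<0$ strictly, which your ``$h'\leqslant 0$'' did not give, so $t_\mu^-w_\Omega\in N_{\lambda,\mu}^-$ with $t_\mu^-\in(t_1,\tilde t_\mu)$.

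This is exactly what the paper's Case I does: it gets $t_\mu^-<t_2$ from $g(t_1)>0$ and $g(t_2)<-\mu C_0$, and never proves $t_\mu^->1$. The lower bound $1$ is available only when $h(1)=\mu C_0-B_0>0$, e.g.\ for $B_0\leqslant 0$ as in the paper's Case II. Nothing downstream needs it: $\|t_\mu^-w_\Omega\|_\lambda>\rho_0$ follows from membership in $N^-_{\lambda,\mu}$ via \eqref{4.4}, and your Step 3 uses only $t_\mu^-<\tilde t_\mu$.

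Step 3 is otherwise sound, though it differs from the paper. Your Nehari identity eliminates $\int a|u|^m$, whereas the paper eliminates $\int\phi_u|u|^p$ and bounds the $a$-part by a global maximum independent of $\tilde t_\mu$. Your version therefore needs extra smallness of $|b|_{q_*}$ (depending on $A_0$) to absorb $(1+|B_0|/A_0)^{p/(m-p)}$ into $1+B(p,q,m)$; this is harmless since $w_\Omega$ is fixed. Your norm bound is correct once stated as $\|t_\mu^-w_\Omega\|_\lambda<(2M(p,q,m))^{1/p}<\widehat D_2$, hence $\|t_\mu^-w_\Omega\|_\lambda<\widehat D_1$ by \eqref{4.5}.
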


\begin{proof}[\bf Proof]
For any  \(t>0\), $tw_\Omega \in N_{\lambda,\mu}$  if and only if there holds
	\[
	t^p\|w_\Omega\|_\lambda^p + \mu t^{2p}\int_{\mathbb{R}^3}\phi_{w_\Omega} w_\Omega^pdx - t^m\int_{\mathbb{R}^3} a(x) w_\Omega^mdx - t^q\int_{\mathbb{R}^3} b(x) w_\Omega^qdx = 0.
	\]
Then, we define
\begin{equation*}
	g(t) = t^{-p}\|w_\Omega\|_\lambda^p - t^{m-2p} \int_{\mathbb{R}^3} a(x) w_\Omega^m dx - t^{q-2p} \int_{\mathbb{R}^3} b(x) w_\Omega^q dx, \quad \forall ~ t>0.
\end{equation*}	
From this it follows that $tw_\Omega\in N_{\lambda,\mu}$, if and only if there holds 
\begin{equation}\label{4.1.10}
	g(t)+\mu\int_{\mathbb{R}^3}\phi_{w_\Omega}w_\Omega^p dx=0.
\end{equation}	
 By \eqref{4.9}, we have
\begin{equation}\label{4.10}
	\begin{aligned}
			g(t) &= t^{-p} \int_\Omega |\nabla w_\Omega|^p dx - t^{m-2p} \int_\Omega a(x) w_\Omega^m dx - t^{q-2p} \int_\Omega b(x) w_\Omega^q dx\\
&= t^{q-2p} \left( t^{p-q} - t^{m-q} - C_{b,\Omega} \right) \int_\Omega a(x) w_\Omega^m dx,
	\end{aligned}
\end{equation}	
	where
	\[
	C_{b,\Omega} :=  \frac{\int_\Omega b(x) w_\Omega^q dx}{\int_\Omega a(x) w_\Omega^m dx}   \leqslant    \frac{|b|_{q_*}}{S_{m,\Omega}^{q}(\int_\Omega a(x) w_\Omega^m dx)^{\frac{p-q}{p}}}.
	\]
Let
$h(t)=t^{p-q}-t^{m-q}-C_{b,\Omega}$. Thus, it follows from $h'(t)=0$  that
	\[
	0 < t_1 = \left( \frac{p-q}{m-q} \right)^{\frac{1}{m-p}} < 1,
	\]
	and
	\begin{align*}
		g(t_1) &= t_1^{q-2p} \left( t_1^{p-q} - t_1^{m-q} - C_{b,\Omega} \right)\int_\Omega a(x) w_\Omega^m dx  \\
		&= t_1^{q-2p} \left[ \frac{m-p}{m-q} \left(\frac{p-q}{m-q}\right)^{\frac{p-q}{m-p}}- C_{b,\Omega}  \right]  \int_\Omega a(x) w_\Omega^m dx \\
		&> 0,
	\end{align*}
	provided that
	\[
	|b|_{q_*} < \Gamma_3 := \frac{(m-p)S_{m,\Omega}^{q}}{m-q} \left( \frac{p-q}{m-q} \right)^{\frac{p-q}{m-p}} \left(\int_\Omega a(x) w_\Omega^m dx\right)^{\frac{p-q}{p}}.
	\]
	Next we  consider this problem in the following two cases.
	
	Case (I): $\int_\Omega b(x) w_\Omega^q dx \geqslant 0$. 
Let
	\[
	\widetilde{g}(t) = t^{-p}\|w_\Omega\|_\lambda^p - t^{m-2p}\int_{\mathbb{R}^3}a(x)w_\Omega^m dx, ~\quad \forall ~ t>0.
	\]
Obviously, $g(t)\leqslant \widetilde{g}(t)$ for all $t>0$. Using \eqref{4.9} and \eqref{4.10}, one has,
	\begin{align*}
		\widetilde{g}(t) &= t^{-p}\int_\Omega|\nabla w_\Omega|^pdx - t^{m-2p}\int_\Omega a(x)w_\Omega^m dx \\
		&=  \left( t^{-p} - t^{m-2p} \right)\int_\Omega a(x) w_\Omega^m dx.
	\end{align*}
Clearly,  
	\begin{align*}
	 \widetilde{g}(1)=0,~~~  \lim_{t\to0^+} \widetilde{g}(t)=+\infty,~~~  \lim_{t\to\infty} \widetilde{g}(t)=0.
\end{align*}
Thus, we derive that
	\begin{align}\label{4.4.1}
	\inf_{t>0}\widetilde{g}(t)=\widetilde{g}(t_2)=-\frac{m-p}{2p-m}\left(\frac{p}{2p-m}\right)^{\frac{-p}{m-p}}\int_\Omega a(x) w_\Omega^m dx <0,
\end{align}
where
	$$t_2 = \left(\frac{p}{2p-m}\right)^{\frac{1}{m-p}} > 1.$$
This implies that $\widetilde{g}(t)$ is decreasing on $(0,t_2)$ and increasing on $(t_2,\infty)$. 
By \eqref{4.4.1}, we get
	\begin{align*}
		-\frac{\widetilde{g}(t_2)}{\int_{\mathbb{R}^3}\phi_{w_\Omega} w_\Omega^p dx}&=\frac{m-p}{2p-m}\left(\frac{p}{2p-m}\right)^{\frac{-p}{m-p}}\frac{\int_\Omega a(x) w_\Omega^m dx }{\int_{\mathbb{R}^3} \phi_{w_\Omega}w_\Omega^pdx}\\[3mm]
		&\geqslant \frac{m-p}{2p-m}\left(\frac{p}{2p-m}\right)^{\frac{-p}{m-p}} \frac{\bar{S}_{2,\Omega}^2S_{\frac{6p}{5},\Omega}^{2p}}{\left(\int_\Omega|\nabla w_\Omega|^pdx\right)^2}\int_\Omega a(x) w_\Omega^m dx  \\[3mm]
		&=\frac{m-p}{2p-m}\left(\frac{p}{2p-m}\right)^{\frac{-p}{m-p}} \frac{\bar{S}_{2,\Omega}^2S_{\frac{6p}{5},\Omega}^{2p}}{\int_\Omega a(x) w_\Omega^m dx }\\[3mm]
		&:=\Theta_{1}.
	\end{align*}
Then for every $0<\mu<\Theta_1$, we have
	\[
	\widetilde{g}(t_2) <-\mu\int_{\mathbb{R}^3}\phi_{w_\Omega}w_\Omega^pdx.
	\]
	Since $g(t_2) \leqslant \widetilde{g}(t_2)$, we get
	 $$g(t_2) < -\mu\int_{\mathbb{R}^3}\phi_{w_\Omega}w_\Omega^pdx.$$
Thus, there exists  two  positive constants $t_\mu^-$ and $t_\mu^+$ satisfying
$
t_\mu^- < t_2 < t_\mu^+
$
such that	
\[
	g(t_\mu^{\pm}) + \mu\int_{\mathbb{R}^3}\phi_{w_\Omega}w_\Omega^p dx = 0,
	\]
and 
$$g'(t_\mu^-)<0,~~~g'(t_\mu^+)>0.$$ 
Hence, it follows from \eqref{4.1.10} that $t_\mu^\pm w_\Omega\in N_{\lambda,\mu}$.
Besides, by \eqref{4.3} and \eqref{4.10}, one has,
	\[
	K_{\lambda, tw_\Omega}''(1) = t^{2p+1}g'(t),~~~\forall ~t>0.
	\]
This implies that $K_{\lambda, t_\mu^- w_\Omega}''(1)<0$ and $K_{\lambda, t_\mu^+ w_\Omega}''(1)>0$. Therefore,  we derive that
	\begin{align}\label{4.1.11}
		t_\mu^- w_\Omega\in N_{\lambda,\mu}^-, ~~~t_\mu^+ w_\Omega\in N_{\lambda,\mu}^+.
		\end{align}	
	
		Next we prove that $t_\mu^- w_\Omega \in N_{\lambda,\mu}^{(1)}$.
For $\int_\Omega b(x) w_\Omega^q dx \geqslant 0$, we have
	\begin{align}\label{4.1.19}
			\begin{aligned}
	\mathcal{J}_{\lambda,\mu}(t_\mu^- w_\Omega) 
		&= \frac{1}{2p}(t_\mu^-)^p \int_\Omega|\nabla w_\Omega|^pdx +\frac{2p-m}{2pm}(t_\mu^-)^m \int_\Omega a(x)w_\Omega^mdx\\[2mm]
		&\quad -\frac{2p-q}{2pq}(t_\mu^-)^q \int_\Omega b(x)w_\Omega^q dx\\[2mm]
		&\leqslant  \frac{1}{2p}(t_\mu^-)^p \int_\Omega|\nabla w_\Omega|^pdx +\frac{2p-m}{2pm}(t_\mu^-)^m \int_\Omega a(x)w_\Omega^mdx \\[2mm]
		&= \frac{1}{2p}(t_\mu^-)^p \left[ 1 - \frac{2p-m}{m}(t_\mu^-)^{m-p} \right]  \int_\Omega a(x)w_\Omega^mdx.
		\end{aligned}
	\end{align}
Define
	\[
	\eta(t) = \frac{t^p}{2p} \left( 1 -  \frac{2p-m}{m} t^{m-p} \right), \quad 	1 < t  < t_2.
	\]
	A simple calculation shows that
	\[
	\max_{0 < t \leqslant t_2} \eta(t) = \eta(t_2) = \frac{m-p}{2pm} \left( \frac{p}{2p-m} \right)^{\frac{p}{m-p}}.
	\]
	Thus, it is conclude that
	\[
	\max_{0 < t \leqslant t_2} \eta(t) \int_\Omega a(x) w_\Omega^m dx < \frac{m-p}{pm} A^p(p,m) \int_\Omega a(x) w_\Omega^m dx.
	\]
Combining this with \eqref{4.1.11}--\eqref{4.1.19}, we obtain
	\[
	\begin{aligned}
		\mathcal{J}_{\lambda,\mu}(t_\mu^- w_\Omega) &< \frac{m-p}{pm} A^p(p,m)(1+ B(p,q,m)) \int_\Omega a(x) w_\Omega^m dx \\
		&= \frac{m-p}{pm} M(p,q,m),
	\end{aligned}
	\]
which leads to $t_\mu^- w_\Omega \in N_{\lambda,\mu}^{(1)}$,

	Case (II): $\int_\Omega b(x) w_\Omega^q dx < 0$. We write
	\begin{align*}
		g(t) &= t^{-p} \int_\Omega |\nabla w_\Omega|^p dx - t^{m-2p} \int_\Omega a(x) w_\Omega^m dx + t^{q-2p} \left| \int_\Omega b(x) w_\Omega^q dx \right| \\
		&=  \left( t^{-p} -  t^{m-2p} \right)\int_\Omega a(x) w_\Omega^m dx  + t^{q-2p} \left| \int_\Omega b(x) w_\Omega^q dx \right|.
	\end{align*}
	Define
	\[
	\widehat{g}(t) = t^{-p} \left(\int_\Omega a(x) w_\Omega^m dx  + \left| \int_\Omega b(x) w_\Omega^q dx \right| \right) - t^{m-2p} \int_\Omega a(x) w_\Omega^m dx, ~\quad \forall ~ t>0.
	\]
	It is easy to verify that
	\begin{equation}
		g(t) < \widehat{g}(t),~ \quad \forall~  t>1, \label{4.11}
	\end{equation}
	and 
	$$\widehat{g}(1)=g(1)>0,~~~  \lim_{t\to0^+} \widehat{g}(t)=+\infty,~~~ \lim_{t\to\infty} \widehat{g}(t) =0.$$
Thus, we derive that
	\begin{equation}
		\inf_{t>0} \widehat{g}(t) = \widehat{g}(t_3) = -\frac{m-p}{(2p-m)t_3^{p}} \left(\int_\Omega a(x) w_\Omega^m dx  + \left| \int_\Omega b(x) w_\Omega^q dx \right| \right) < 0, \label{4.13}
	\end{equation}
	where 
		\begin{equation}
		t_3 := \left[ \frac{p}{2p-m} \left( 1 +  \frac{\left| \int_\Omega b(x) w_\Omega^q dx \right|}{\int_\Omega a(x) w_\Omega^m dx } \right) \right]^{\frac{1}{m-p}} > 1. \label{4.12}
	\end{equation}
	Moreover, $\widehat{g}(t)$ is decreasing on $(0,t_3)$ and is increasing on $(t_3,\infty)$. 
By \eqref{4.13}, it is concluded that
	\begin{align*}
		-\frac{\widehat{g}(t_3)}{\int_{\mathbb{R}^3}\phi_{w_\Omega} w_\Omega^p dx} 	&=\frac{m-p}{(2p-m)t_3^{p}} \left(\int_\Omega a(x) w_\Omega^m dx  + \left| \int_\Omega b(x) w_\Omega^q dx \right| \right) \frac{1}{\int_{\mathbb{R}^3} \phi_{w_\Omega}w_\Omega^pdx}\\[3mm]
		&\geqslant \frac{m-p}{(2p-m)t_3^{p}} \left(\int_\Omega a(x) w_\Omega^m dx  + \left| \int_\Omega b(x) w_\Omega^q dx \right| \right)
		\frac{\bar{S}_{2,\Omega}^2S_{\frac{6p}{5},\Omega}^{2p}}{\left(\int_\Omega|\nabla w_\Omega|^pdx\right)^2}\\[3mm]
		&= \frac{m-p}{(2p-m)t_3^{p}} \left( 1 +  \frac{\left| \int_\Omega b(x) w_\Omega^q dx \right|}{\int_\Omega a(x) w_\Omega^m dx } \right)
		\frac{\bar{S}_{2,\Omega}^2S_{\frac{6p}{5},\Omega}^{2p}}{\int_\Omega|\nabla w_\Omega|^pdx}\\[3mm]
		&=\frac{m-p}{2p-m}\left(\frac{p}{2p-m}\right)^{\frac{-p}{m-p}}\left( 1 +  \frac{\left| \int_\Omega b(x) w_\Omega^q dx \right|}{\int_\Omega a(x) w_\Omega^m dx } \right) ^{\frac{m-2p}{m-p}} \frac{\bar{S}_{2,\Omega}^2S_{\frac{6p}{5},\Omega}^{2p}}{\int_\Omega a(x) w_\Omega^m dx }\\[3mm]
		&:=\Theta_{2}.
	\end{align*}
	Hence, for every $0<\mu<\Theta_{2}$, we have
	\[
	\widehat{g}(t_3)<-\mu\int_{\mathbb{R}^3}\phi_{w_\Omega}w_\Omega^pdx.
	\]
Together with \eqref{4.11} and \eqref{4.12}, we get
	\begin{equation}
		g(t_3) < \widehat{g}(t_3) < -\mu \int_{\mathbb{R}^3} \phi_{w_\Omega} w_\Omega^p dx. \label{4.14}
	\end{equation}
 Thus, there exists  two  positive constants $t_\mu^-$ and $t_\mu^+$ satisfying
	\begin{equation}
	1 < t_\mu^- < t_3 < t_\mu^+. \label{4.15}
\end{equation}
such that
	\[
	g( t_\mu^{\pm}) + \mu\int_{\mathbb{R}^3}\phi_{w_\Omega}w_\Omega^p dx = 0,
	\]
and
$$g'(t_\mu^-)<0,~~~g'(t_\mu^+)>0.$$
Hence, it follows from \eqref{4.1.10} that $t_\mu^\pm w_\Omega\in N_{\lambda,\mu}$.
Therefore, similar to the proof of \eqref{4.1.11}, we conclude that $t_\mu^\pm w_\Omega\in N_{\lambda,\mu}^\pm$.

	Next we prove that $t_\mu^- w_\Omega \in N_{\lambda,\mu}^{(1)}$. For $\int_\Omega b(x) w_\Omega^q dx < 0$, we have
	\begin{equation}\label{4.16}
		\begin{aligned}
			&\mathcal{J}_{\lambda,\mu}(t_\mu^- w_\Omega)\\[2mm]
			&= \frac{1}{2p}(t_\mu^-)^p \int_\Omega|\nabla w_\Omega|^pdx -\frac{2p-m}{2pm}(t_\mu^-)^m \int_\Omega a(x)w_\Omega^mdx
			-\frac{2p-q}{2pq}(t_\mu^-)^q \int_\Omega b(x)w_\Omega^q dx\\[2mm]
			&= \frac{1}{2p}(t_\mu^-)^p  \left[ 1 - \frac{2p-m}{m}(t_\mu^-)^{m-p} \right] \int_\Omega a(x)w_\Omega^mdx
			  + \frac{2p-q}{2pq}(t_\mu^-)^q \left| \int_\Omega b(x)w_\Omega^q dx \right| \\[2mm]
			&:= \mathcal{J}_1 + \mathcal{J}_2,
		\end{aligned}
	\end{equation}
	where
	\[
	\mathcal{J}_1 = \frac{1}{2p}(t_\mu^-)^p  \left[ 1 - \frac{2p-m}{m}(t_\mu^-)^{m-p} \right] \int_\Omega a(x)w_\Omega^mdx,
	\]
	and
	\begin{equation}\label{4.4.26}
			\begin{aligned}
	\mathcal{J}_2 &=\frac{2p-q}{2pq}(t_\mu^-)^q \left| \int_\Omega b(x)w_\Omega^q dx \right|\\
		&\leqslant \frac{2p-q}{2pq}\left[ \frac{p}{2p-m} \left( 1 +  \frac{\left| \int_\Omega b(x) w_\Omega^q dx \right|}{\int_\Omega a(x) w_\Omega^m dx } \right) \right]^{\frac{q}{m-p}}  \left| \int_\Omega b(x)w_\Omega^q dx \right|.
	\end{aligned}
		\end{equation}
On the one hand, we  define a function 
	\[
	S(t) = \frac{t^p}{2p} \left( 1 - \frac{2p-m}{m} t^{m-p} \right), \quad 	1 < t < t_3.
	\]
	By a direct calculation, we obtain
	\[
	\max_{0 < t \leqslant  t_3} S(t) = S(t_*) = \frac{m-p}{2pm} \left( \frac{p}{2p-m} \right)^{\frac{p}{m-p}},
	\]
	where
	\begin{equation*}
		t_* = \left( \frac{p}{2p-m} \right)^{\frac{1}{m-p}} \in (1, t_3). 
	\end{equation*}
	Hence,
	\[
	\max_{0 < t \leqslant t_3} S(t) \int_\Omega a(x)w_\Omega^m dx < \frac{(m-p)A^p(p,m)}{pm} \|w_{\Omega}\|_\lambda^p,
	\]
this implies that
	\begin{equation}
		\mathcal{J}_1  < \frac{(m-p)}{pm}A^p(p,m) \|w_{\Omega}\|_\lambda^p. \label{4.18}
	\end{equation}
	On the other hand, by $0 < \mu + |b|_{q_*} < \Pi_2$ and \eqref{4.4.26}, it holds
	\begin{equation}
		\begin{aligned}
			\mathcal{J}_2 &\leqslant \frac{2p-q}{2pq} \left[ \frac{p}{2p-m} \left( 1 + \frac{\left|\int_\Omega b(x) w_\Omega^q dx\right|}{\int_\Omega a(x) w_\Omega^m dx} \right) \right]^{\frac{q}{m-p}} \left| \int_\Omega b(x) w_\Omega^q dx \right| \\[2mm]
			&\leqslant  \frac{2p-q}{2pq}  \left[\frac{p}{2p-m}  \left( 1 + \frac{|b|_{q_*} |w_\Omega|_{m}^q}{\int_\Omega a(x) w_\Omega^m dx} \right) \right]^{\frac{q}{m-p}} |b|_{q_*}S_{m,\Omega}^{-q}\left(\int_\Omega a(x) w_\Omega^m dx\right)^{\frac{q}{p}}  \\[2mm]
			&\leqslant
			\begin{cases}
				\frac{2p-q}{pq} \left( \frac{p}{2p-m} \right)^{\frac{q}{m-p}} |b|_{q_*}  S_{m,\Omega}^{-q} \left( \int_\Omega a(x) w_\Omega^m dx \right)^{\frac{q}{p}} & \text{if } q \leqslant m-p \\[2mm]
				\frac{2p-q}{2pq} \left( \frac{2p}{2p-m} \right)^{\frac{q}{m-p}} |b|_{q_*}S_{m,\Omega}^{-q} \left( \int_\Omega a(x) w_\Omega^m dx \right)^{\frac{q}{p}} & \text{if } q >  m-p
			\end{cases} \\[2mm]
			&\leqslant \frac{2p-q}{pq} \left(\frac{2p}{2p-m} \right)^{\frac{q}{m-p}} |b|_{q_*} S_{m,\Omega}^{-q} \left( \int_\Omega a(x) w_\Omega^p dx \right)^{\frac{q}{p}} \\[2mm]
			&\leqslant \frac{m-p}{pm} A^p(p,m) B(p,q,m) \int_\Omega a(x) w_\Omega^m dx.
		\end{aligned}
		\label{4.19}
	\end{equation}
	It follows from \eqref{4.16}--\eqref{4.19} that
	\[
	\begin{aligned}
		\mathcal{J}_{\lambda,\mu}(t_\mu^- w_\Omega) &< \frac{m-p}{pm} A^p(p,m)(1+ B(p,q,m)) \int_\Omega a(x) w_\Omega^m dx \\
		&= \frac{m-p}{pm} M(p,q,m).
	\end{aligned}
	\]
	which implies that $t_\mu^- w_\Omega \in N_{\lambda,\mu}^{(1)}$.	
	The proof is complete.
\end{proof}

\section{A solution with positive energy}\label{sec5}

	In this section, we  will  prove that if  $\lambda$ is large enough, then system \eqref{107} has a positive
	energy solution for  sufficiently small $\mu$  and $|b|_{q_*}$.
Following \cite{1992Ta}, we have the following result.

\begin{lemma}\label{lem5.1}
Let $\frac{3}{2}<p<3$ and  $1<q<p<m<2p$.  Assume that  $(H_{1})$, $(H_{3})$ and $(H_{4})$ hold. There exist a constant $\sigma>0$ and a differentiable function $t^*: B(0,\sigma)\subset W_\lambda \to\mathbb{R}^+$ such that  for each $u \in N_{\lambda,\mu}^{(1)}$, there hold
	\[
	t^*(0)=1, ~~ t^*(v)(u-v)\in N_{\lambda,\mu}^{(1)}, ~~\forall~v\in B(0,\sigma),
	\]
 and  
	\begin{align*}
		\langle (t^*)'(0),\varphi\rangle =\frac{\psi_u(\varphi)}{\Phi(u)}, ~~\forall ~ \varphi\in W_\lambda,
	\end{align*}
	where
	\begin{align*}
		\psi_u(\varphi)=& p\int_{\mathbb{R}^3}(|\nabla u|^{p-2}\nabla u\nabla\varphi+\lambda V(x)|u|^{p-2}u\varphi)dx+2p \mu\int_{\mathbb{R}^3}\phi_u |u|^{p-2}u\varphi dx\\
		&-m\int_{\mathbb{R}^3}a(x)|u|^{m-2}u\varphi dx-q\int_{\mathbb{R}^3}b(x)|u|^{q-2}u\varphi dx,
	\end{align*}
	and
	$$\Phi(u)=-p\|u\|_\lambda^p+(2p-m)\int_{\mathbb{R}^3}a(x)|u|^mdx+(2p-q)\int_{\mathbb{R}^3}b(x)|u|^qdx.$$
\end{lemma}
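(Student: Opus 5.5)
Fix $u\in N_{\lambda,\mu}^{(1)}$ and apply the implicit function theorem to
\[
F:\mathbb{R}^+\times W_\lambda\to\mathbb{R},\qquad F(t,v)=\langle \mathcal{J}_{\lambda,\mu}'(t(u-v)),t(u-v)\rangle ,
\]
whose zero set near $(1,0)$ describes the rays $t(u-v)$ hitting $N_{\lambda,\mu}$. Explicitly, by Proposition \ref{pro2.1}(i),
\[
F(t,v)=t^p\|u-v\|_\lambda^p+\mu t^{2p}\int_{\mathbb{R}^3}\phi_{u-v}|u-v|^pdx-t^m\int_{\mathbb{R}^3}a|u-v|^mdx-t^q\int_{\mathbb{R}^3}b|u-v|^qdx .
\]
This is $C^1$ on $(0,\infty)\times W_\lambda$, since $\mathcal{J}_{\lambda,\mu}$ is $C^1$ and the individual terms are $C^1$ functionals of $u-v$; the quartic-type term is handled through Proposition \ref{pro2.1}(ii),(iii) and the $C^1$ property of $u\mapsto\int\phi_u|u|^p$ already used to define $\mathcal{J}_{\lambda,\mu}$. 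At $(1,0)$ we have $F(1,0)=0$ because $u\in N_{\lambda,\mu}$.

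Next compute the partial derivatives at $(1,0)$. In $t$ we get $F_t(1,0)=p\|u\|^p_\lambda+2p\mu\int\phi_u|u|^p-m\int a|u|^m-q\int b|u|^q$. Subtracting $p$ times the Nehari identity \eqref{4.0.1} yields $p\mu\int\phi_u|u|^p-(m-p)\int a|u|^m-(q-p)\int b|u|^q$, which equals $K_{\lambda,u}''(1)$. Using \eqref{4.0.1} once more to eliminate $\mu\int\phi_u|u|^p$, this becomes exactly \eqref{4.3}, i.e. $\Phi(u)$.

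Lemma \ref{lem4.3} (with $\lambda\ge\lambda_0$ and $\mu+|b|_{q_*}$ small) gives $N_{\lambda,\mu}^{(1)}\subset N_{\lambda,\mu}^-$, so $\Phi(u)=K_{\lambda,u}''(1)<0\neq0$. The implicit function theorem then provides $\sigma>0$ and a $C^1$ function $t^*$ on $B(0,\sigma)$ with $t^*(0)=1$, $t^*(v)>0$ and $F(t^*(v),v)=0$, so that $t^*(v)(u-v)\in N_{\lambda,\mu}$.

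The derivative formula is $\langle (t^*)'(0),\varphi\rangle=-F_v(1,0)\varphi/F_t(1,0)$. Differentiating in $v$ at $0$ gives $F_v(1,0)\varphi=-\psi_u(\varphi)$; the factor $2p$ in front of the Poisson term comes from the $2p$-homogeneity of $\int\phi_u|u|^p$, with derivative $2p\int\phi_u|u|^{p-2}u\varphi$. Hence $\langle (t^*)'(0),\varphi\rangle=\psi_u(\varphi)/\Phi(u)$.

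The main obstacle is upgrading membership in $N_{\lambda,\mu}$ to membership in $N_{\lambda,\mu}^{(1)}$, which is defined by open conditions: $\|w\|_\lambda>\rho_0$, $\|w\|_\lambda<\widehat D_1$, and $\mathcal{J}_{\lambda,\mu}(w)<\frac{m-p}{pm}M(p,q,m)$. All three hold strictly at $w=u$. The map $v\mapsto t^*(v)(u-v)$ is continuous and equals $u$ at $v=0$, and $\|\cdot\|_\lambda$ and $\mathcal{J}_{\lambda,\mu}$ are continuous. So after shrinking $\sigma$ (depending on $u$) all strict inequalities persist, and $t^*(v)(u-v)\in N_{\lambda,\mu}^{(1)}$ for every $v\in B(0,\sigma)$. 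Note that $\sigma$ and $t^*$ depend on $u$, which is the intended reading, as in \cite{1992Ta}.
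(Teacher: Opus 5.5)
Your proposal is correct and follows essentially the same route as the paper. You apply the implicit function theorem to $F_u(t,v)=\langle \mathcal{J}_{\lambda,\mu}'(t(u-v)),t(u-v)\rangle$ at $(1,0)$, use $\partial_t F_u(1,0)=\Phi(u)<0$ from $N_{\lambda,\mu}^{(1)}\subset N_{\lambda,\mu}^-$, and then shrink $\sigma$ (depending on $u$) so the open conditions defining $N_{\lambda,\mu}^{(1)}$ persist; the only cosmetic difference is that you keep $\|w\|_\lambda>\rho_0$ directly by continuity, whereas the paper keeps $K''_{\lambda,w}(1)<0$, which gives $\|w\|_\lambda>\rho_0$ via \eqref{4.4}.
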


\begin{proof}
	For  $u\in N_{\lambda,\mu}^{(1)}$, we define the function $F_u:\mathbb{R}\times W_\lambda\to\mathbb{R}$ by
	\[
	F_u(t,v)=\langle \mathcal{J}_{\lambda,\mu}'(t(u-v)),t(u-v)\rangle.
	\]
By a  direct calculation, one has, 
	\[
	\begin{aligned}
		\displaystyle	F_u(t,v)&=	\displaystyle t^p\|u-v\|_\lambda^p + \mu t^{2p}\int_{\mathbb{R}^3}\phi_{u-v}|u-v|^p\,dx \\
		&\quad 	\displaystyle -t^m\int_{\mathbb{R}^3}a(x)|u-v|^mdx-t^q\int_{\mathbb{R}^3}b(x)|u-v|^qdx.
	\end{aligned}
	\]
	Since $u\in N_{\lambda,\mu}^{(1)}$, it is easy to see that $F_u(1,0)=\langle \mathcal{J}_{\lambda,\mu}'(u),u\rangle=0$. Combining this with \eqref{4.3}, we derive that
	\begin{align*}
		\frac{\partial F_u}{\partial t}(1,0)&=p\|u\|_\lambda^p+2p \mu\int_{\mathbb{R}^3}\phi_u |u|^pdx-m\int_{\mathbb{R}^3}a(x)|u|^mdx-q\int_{\mathbb{R}^3}b(x)|u|^qdx \\
		&=-p\|u\|_\lambda^p-(m-2p)\int_{\mathbb{R}^3}a(x)|u|^mdx-(q-2p)\int_{\mathbb{R}^3}b(x)|u|^qdx \\
		&<0.
	\end{align*}
	Applying the implicit function theorem, we conclude that there exist a constant $\sigma_0>0$ and a differentiable function $t^*:B(0,\sigma_0)\subset W_\lambda \to\mathbb{R}$ with $t^*(0)=1$ such that
	\[
	F_u(t^*(v),v)=0,\quad \forall~v\in B(0,\sigma_0).
	\]
This yields that
	\[
	\langle \mathcal{J}_{\lambda,\mu}'(t^*(v)(u-v)),t^*(v)(u-v)\rangle=0,\quad \forall~v\in B(0,\sigma_0).
	\]
Thus, for any $\varphi\in  W_\lambda$, it is  concluded that
	\[
	\langle (t^*)'(0),\varphi\rangle = \frac{\psi_u(\varphi)}{\Phi(u)}.
	\]
	 By the  continuity of the map $t^*$ and of the functional $\mathcal{J}_{\lambda,\mu}$, we may choose $\sigma\in(0,\sigma_0]$ sufficiently small such that 
	$\|t^*(v)(u-v)\|_\lambda<\widehat D_1$ for all $v\in B(0,\sigma)$, and further,
	\begin{align*}
		K_{\lambda,t^*(v)(u-v)}''(1) &= -p\|t^*(v)(u-v)\|_{\lambda}^p-(m-2p)\int_{\mathbb{R}^3}a(x)|t^*(v)(u-v)|^mdx \\
		&\quad -(q-2p)\int_{\mathbb{R}^3}b(x)|t^*(v)(u-v)|^qdx \\
		&<0,
	\end{align*}
	and
	\[
	\mathcal{J}_{\lambda,\mu}(t^*(v)(u-v)) < \frac{m-p}{pm}M(p,q,m).
	\]
Therefore, we derive that $t^*(v)(u-v)\in N_{\lambda,\mu}^{(1)}$ for all $v\in B(0,\sigma)$. This completes the proof.
\end{proof}

By the Lemmas \ref{lem4.2}--\ref{lem4.4}, we define
	\begin{equation}\label{5.2.1}
c^{**}=\inf_{u\in N_{\lambda,\mu}^{(1)}}\mathcal{J}_{\lambda,\mu}(u).
	\end{equation}
Combining this with the fact of $N_{\lambda,\mu}^{(1)}\subset N_{\lambda,\mu}^-$, we derive that
\begin{equation}
	0<D\leqslant c^{**}<\frac{m-p}{pm}M(p,q,m). \label{5.1}
\end{equation}

\begin{lemma}\label{lem5.2}
 Let $\frac{3}{2}<p<3$ and $1<q<p<m<2p$.	Assume that  $(H_{1})$, $(H_{3})$ and $(H_{4})$ hold. Then there exists a sequence $\{u_n\}\subset N_{\lambda,\mu}^{(1)}$ such that
	\begin{equation}
		\mathcal{J}_{\lambda,\mu}(u_n)= c^{**}+o(1),\quad  \mathcal{J}_{\lambda,\mu}'(u_n)=o(1)\text{ in }W_\lambda^{-1}. \label{5.2}
	\end{equation}
\end{lemma}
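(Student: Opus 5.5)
We construct a minimizing sequence via Ekeland's variational principle on the closure of $N_{\lambda,\mu}^{(1)}$, then use Lemma \ref{lem5.1} to turn the Ekeland inequality into smallness of $\mathcal{J}_{\lambda,\mu}'(u_n)$. This is the Tarantello argument adapted to the filtered set.

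First, $\mathcal{J}_{\lambda,\mu}$ is bounded below on $N_{\lambda,\mu}^{(1)}$ by $D>0$, by Lemma \ref{lem4.2} and $N_{\lambda,\mu}^{(1)}\subset N_{\lambda,\mu}^-$ (Lemma \ref{lem4.3}). The set is nonempty by Lemma \ref{lem4.4}, so $c^{**}$ in \eqref{5.2.1} is finite and satisfies \eqref{5.1}. Apply Ekeland's principle to $\mathcal{J}_{\lambda,\mu}$ on the closure $\overline{N_{\lambda,\mu}^{(1)}}$, a complete metric space. This gives $\{u_n\}$ with
$$\mathcal{J}_{\lambda,\mu}(u_n)<c^{**}+\tfrac1n,\qquad \mathcal{J}_{\lambda,\mu}(w)\geqslant \mathcal{J}_{\lambda,\mu}(u_n)-\tfrac1n\|w-u_n\|_\lambda\quad\forall\, w\in \overline{N_{\lambda,\mu}^{(1)}}.$$
We need $u_n\in N_{\lambda,\mu}^{(1)}$ itself. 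A limit point of $N_{\lambda,\mu}^{(1)}$ lies in $N_{\lambda,\mu}$ with $\|u\|_\lambda\leqslant \widehat D_1$ and $\mathcal{J}_{\lambda,\mu}\leqslant \frac{m-p}{pm}M$. Since $c^{**}$ is strictly below $\frac{m-p}{pm}M$, for large $n$ the energy bound is strict. The gap structure \eqref{4.5} gives $\|u_n\|_\lambda<\widehat D_1$ strictly. Finally, \eqref{4.4}, applied via the strict inequality $K''<0$ from \eqref{4.7} (which persists with $\leqslant\widehat D_1$), gives $\|u_n\|_\lambda>\rho_0$. Hence $u_n\in N_{\lambda,\mu}^{(1)}$.

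Also $\{u_n\}$ is bounded, by $\widehat D_1$. It is bounded away from $0$, since $\|u_n\|_\lambda>\rho_0$. Moreover $\Phi(u_n)=K''_{\lambda,u_n}(1)$ stays uniformly away from $0$: by \eqref{4.7} together with \eqref{4.5.1}--\eqref{4.5.2}, there is $\delta>0$ with $K''_{\lambda,u_n}(1)\leqslant-\delta$. I expect this uniform bound to be the main obstacle. To get it I would sharpen \eqref{4.7}, bounding $K''$ by $-\|u\|_\lambda^p\bigl[(m-p)-C(\mu,|b|_{q_*})\widehat D_1^{\,p}\bigr]$ and using $\|u\|_\lambda>\rho_0$.

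Next, fix $n$ and a direction $\varphi\in W_\lambda$ with $\|\varphi\|_\lambda=1$, and let $\delta\in(0,\sigma)$ be small. Apply Lemma \ref{lem5.1} at $u=u_n$ with $v=\delta\varphi$, and set $w_\delta=t^*_n(\delta\varphi)(u_n-\delta\varphi)\in N_{\lambda,\mu}^{(1)}$. The Ekeland inequality gives
$$\tfrac1n\|w_\delta-u_n\|_\lambda\geqslant \mathcal{J}_{\lambda,\mu}(u_n)-\mathcal{J}_{\lambda,\mu}(w_\delta)=\langle\mathcal{J}_{\lambda,\mu}'(w_\delta),u_n-w_\delta\rangle+o(\|u_n-w_\delta\|_\lambda).$$
Write $u_n-w_\delta=\delta t^*_n(\delta\varphi)\varphi+(1-t^*_n(\delta\varphi))u_n$. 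Since $w_\delta\in N_{\lambda,\mu}$, the radial part contributes $\langle\mathcal{J}_{\lambda,\mu}'(w_\delta),w_\delta\rangle=0$ up to the $o(\delta)$ correction, and $|t^*_n(\delta\varphi)-1|\leqslant C\delta\|(t^*_n)'(0)\|+o(\delta)$. Dividing by $\delta$ and letting $\delta\to0^+$ yields
$$\langle\mathcal{J}_{\lambda,\mu}'(u_n),\varphi\rangle\leqslant \tfrac Cn\bigl(1+\|(t^*_n)'(0)\|\bigr).$$
Replacing $\varphi$ by $-\varphi$ gives the same bound in absolute value.

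Finally, bound $\|(t^*_n)'(0)\|=\sup_{\|\varphi\|_\lambda=1}|\psi_{u_n}(\varphi)|/|\Phi(u_n)|$ uniformly. The numerator is bounded because $\|u_n\|_\lambda\leqslant\widehat D_1$, using Hölder, \eqref{2.1.2} and Proposition \ref{pro2.1}(ii). The denominator is at least $\delta>0$ by the step above. Therefore $\|\mathcal{J}_{\lambda,\mu}'(u_n)\|_{W_\lambda^{-1}}\leqslant C/n\to0$, while $\mathcal{J}_{\lambda,\mu}(u_n)\to c^{**}$, which proves \eqref{5.2}.
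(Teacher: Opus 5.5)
Your proposal is correct and is essentially the paper's argument: Ekeland's principle on the filtered set, then the perturbation $t_n^*(v)(u_n-v)$ from Lemma~\ref{lem5.1}, with the Nehari identity absorbing the radial component, which yields a bound $|\langle\mathcal{J}_{\lambda,\mu}'(u_n),\varphi\rangle|\leqslant \frac{1}{n}\bigl(1+\|u_n\|_\lambda\|(t_n^*)'(0)\|\bigr)$. Your two additions fill in steps the paper's proof skips: running Ekeland on the closure of $N_{\lambda,\mu}^{(1)}$ and showing the $u_n$ fall back into $N_{\lambda,\mu}^{(1)}$, and the uniform estimate $K''_{\lambda,u_n}(1)\leqslant-\rho_0^p\bigl[(m-p)-C\widehat D_1^{\,p}\bigr]$, which replaces the paper's unjustified claim that $\|(t_n^*)'(0)\|\leqslant C$ uniformly in $n$ (this bound is strictly negative when $\widehat D_1$ is the smaller root in \eqref{4.5}, since then $\widehat D_1^{\,p}<\frac{m-p}{2pm\beta}$ with $\beta$ the quantity in \eqref{4.5.1}, while $C\leqslant 2pm\beta$ because $q<2p$).
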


\begin{proof}
By \eqref{5.2.1}, using the  Ekeland's variational principle \cite{1974Ek}, there exists a minimizing sequence $\{u_n\}\subset N_{\lambda,\mu}^{(1)}$ such that
	\[
 c^{**}   \leqslant	\mathcal{J}_{\lambda,\mu}(u_n)\leqslant c^{**}+\frac{1}{n},
	\]
	and
	\begin{equation}
		\mathcal{J}_{\lambda,\mu}(u_n)\leqslant \mathcal{J}_{\lambda,\mu}(w)+\frac{1}{n}\|w-u_n\|_\lambda,\quad \forall ~w\in N_{\lambda,\mu}^{(1)}. \label{5.3}
	\end{equation}
	Applying Lemma \ref{lem5.1} with $u=u_n$, there exists a function $t_n^*:B(0,\sigma_n)\to\mathbb{R}$ for some $\sigma_n>0$ such that $t_n^*(w)(u_n-w)\in N_{\lambda,\mu}^{(1)}$. For  $u\in W_\lambda \backslash \{0\}$,	 we take  $\delta\in (0,\sigma_n)$  and set
	\[
	w_\delta=\frac{\delta u}{\|u\|_\lambda},\quad z_\delta=t_n^*(w_\delta)(u_n-w_\delta).
	\]
	This implies that $z_\delta\in N_{\lambda,\mu}^{(1)}$. Substituting $w=z_\delta$ into \eqref{5.3}, it is easy to obtain
	\[
	\mathcal{J}_{\lambda,\mu}(z_\delta)-\mathcal{J}_{\lambda,\mu}(u_n)\geqslant-\frac{1}{n}\|z_\delta-u_n\|_\lambda.
	\]
	Applying the mean value theorem,  we get
	\begin{equation}
		\langle \mathcal{J}_{\lambda,\mu}'(u_n),z_\delta-u_n\rangle+o(\|z_\delta-u_n\|_\lambda)\geqslant-\frac{1}{n}\|z_\delta-u_n\|_\lambda. \label{5.4}
	\end{equation}
Obviously,
\begin{equation}\label{5.4.4}
	z_\delta-u_n = \big(t_n^*(w_\delta)-1\big)(u_n-w_\delta) - w_\delta,
	\end{equation}
and substituting  this into \eqref{5.4}, one has,
\begin{eqnarray}\label{5.5}
	\begin{array}{ll}
	&	\displaystyle  -\langle \mathcal{J}_{\lambda,\mu}'(u_n), w_\delta\rangle
		+ (t_n^*(w_\delta)-1)\langle \mathcal{J}_{\lambda,\mu}'(u_n),u_n-w_\delta\rangle  \\[2mm]
		& \displaystyle \geqslant -\frac{1}{n}\|z_\delta-u_n\|_\lambda
		+ o(\|z_\delta-u_n\|_\lambda). 
 \end{array}
\end{eqnarray}
 Moreover, for \(z_\delta\in N_{\lambda,\mu}^{(1)}\), we have
    \[
    \begin{aligned}
	(t_n^*(w_\delta)-1)\langle \mathcal{J}_{\lambda,\mu}'(u_n), u_n-w_\delta\rangle
	&= (t_n^*(w_\delta)-1)\langle \mathcal{J}_{\lambda,\mu}'(z_\delta), (u_n-w_\delta)\rangle \\
	&\quad ~ + (t_n^*(w_\delta)-1)\langle \mathcal{J}_{\lambda,\mu}'(u_n)-\mathcal{J}_{\lambda,\mu}'(z_\delta), u_n-w_\delta\rangle \\
	&= (t_n^*(w_\delta)-1)\langle \mathcal{J}_{\lambda,\mu}'(u_n)-\mathcal{J}_{\lambda,\mu}'(z_\delta), u_n-w_\delta\rangle .
    \end{aligned}
    \]
    Substituting this identity into \eqref{5.5} and recalling \(w_\delta = \frac{\delta u}{\|u\|_\lambda}\), we obtain
    \begin{align}\label{5.4.5}
    	 \begin{aligned}
	\frac{\langle \mathcal{J}_{\lambda,\mu}'(u_n), u \rangle}{\|u\|_\lambda}
	\leqslant& \frac{\|z_\delta-u_n\|_\lambda}{\delta n}+\frac{o(\|z_\delta-u_n\|_\lambda)}{\delta} \\
	&+\frac{(t_n^*(w_\delta)-1)}{\delta}\langle \mathcal{J}_{\lambda,\mu}'(u_n)-\mathcal{J}_{\lambda,\mu}'(z_\delta),u_n-w_\delta\rangle.
    \end{aligned}
    \end{align}
  Using \eqref{5.4.4} and  $\|w_\delta\|_\lambda\ = \delta$, we get
    \[
    \begin{aligned}
	\|z_\delta-u_n\|_\lambda
	&\leqslant |t_n^*(w_\delta)-1| \, \|u_n-w_\delta\|_\lambda + \delta \\
	&\leqslant C|t_n^*(w_\delta)-1| + \delta ,
    \end{aligned}
    \]
    where $C>0$ is a constant independent of $\delta$. 
By $t_n^*(0) = 1$, it follows that
    \[
    \lim_{\delta\to0}\frac{|t_n^*(w_\delta)-1|}{\delta}
    =\lim_{\delta\to0}\frac{|t_n^*(w_\delta)-t_n^*(0)|}{\delta}
    \leqslant\|(t_n^*)'(0)\|_{W^{-1}_\lambda}\leqslant C.
    \]
     Passing to the limit as $\delta \to 0$ in \eqref{5.4.5}, we conclude that
    \[
    \frac{\langle \mathcal{J}_{\lambda,\mu}'(u_n), u \rangle}{\|u\|_\lambda}\leqslant\frac{C}{n},
    \]
    which yields \eqref{5.2}. This completes the proof.
\end{proof}


\begin{theorem}\label{thm5.4}
 Let $\frac{3}{2}<p<3$ and $1<q<p<m<2p$.	Assume that  $(H_{1})$-$(H_{4})$ hold.  There exists a constant $\Pi_0\leqslant \Pi_2$ such that if $0<\mu+|b|_{q_*}<\Pi_0$ and  $\lambda>0$ is sufficiently large, then system \eqref{107}  admits a positive  solution $u_{\lambda,\mu}^-\in N_{\lambda,\mu}^{(1)}$  with  positive energy and  	$\rho_0 < \|u_{\lambda,\mu}^- \|_\lambda < \widehat{D}_1$, where $\Pi_2>0$ is given in Lemma \ref{lem4.2}.
\end{theorem}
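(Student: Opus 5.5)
We take $\Pi_0\leqslant\Pi_2$ small enough that Lemmas \ref{lem3.1}, \ref{lem4.2}, \ref{lem4.3} and \ref{lem4.4} all apply, and that \eqref{4.5.1} holds together with $|b|_{q_*}<\Gamma_1$. Then $N_{\lambda,\mu}^{(1)}$ is nonempty by Lemma \ref{lem4.4}, and $N_{\lambda,\mu}^{(1)}\subset N_{\lambda,\mu}^-$ by Lemma \ref{lem4.3}. Hence the level $c^{**}$ in \eqref{5.2.1} satisfies \eqref{5.1}. Lemma \ref{lem5.2} gives a sequence $\{u_n\}\subset N_{\lambda,\mu}^{(1)}$ with $\mathcal{J}_{\lambda,\mu}(u_n)\to c^{**}$ and $\mathcal{J}_{\lambda,\mu}'(u_n)\to0$. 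By the definition of $N_{\lambda,\mu}^{(1)}$, we have $\rho_0<\|u_n\|_\lambda<\widehat D_1$, so the sequence is bounded uniformly in $\lambda$. The bound $\widehat D_1$ depends only on $p,q,m,a,\Omega$ and the constants in \eqref{2.1.2}, not on $\lambda$.

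Next we apply Lemma \ref{lem2.3} with $c_1:=\widehat D_1$. Its proof shows that strong convergence holds once
$$\Big(\tfrac{1}{\lambda c_0}\Big)^{\frac{m(p-3)+3p}{p^2}}\frac{2^m\widehat D_1^m\|a\|_\infty}{\bar S_p^{3(m-p)/p}}$$
is small compared with the positive quantity $\|v_n\|_\lambda^p$. More precisely, we use the inequality $\|v_n\|_\lambda^p(1-C\lambda^{-\kappa}\|v_n\|_\lambda^{m-p})\leqslant o(1)$ with $\kappa=\frac{m(p-3)+3p}{p^2}>0$; the exponent is positive since $m<p^*$. Taking $\lambda$ large then forces $v_n\to0$. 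So, up to a subsequence, $u_n\to u_{\lambda,\mu}^-$ strongly in $W_\lambda$.

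By continuity, $\mathcal{J}_{\lambda,\mu}(u_{\lambda,\mu}^-)=c^{**}\geqslant D>0$, $\mathcal{J}_{\lambda,\mu}'(u_{\lambda,\mu}^-)=0$ and $\rho_0\leqslant\|u_{\lambda,\mu}^-\|_\lambda\leqslant\widehat D_1$. The main obstacle is showing that the limit stays in $N_{\lambda,\mu}^{(1)}$ with strict inequalities. Since $u_{\lambda,\mu}^-\neq0$ is a critical point, it lies in $N_{\lambda,\mu}$. To get $\|u_{\lambda,\mu}^-\|_\lambda>\rho_0$, we pass to the limit in $K''_{\lambda,u_n}(1)\leqslant0$ to obtain $K''_{\lambda,u}(1)\leqslant0$. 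Then we repeat the strict inequality from Lemma \ref{lem4.2} (the Poisson term is strictly positive for $u\neq0$), which gives $\|u\|_\lambda>\rho_0$; this also shows $u\notin N^0_{\lambda,\mu}$, because \eqref{4.1} is strictly negative there. For the upper bound, $\|u\|_\lambda\leqslant\widehat D_1$ together with the dichotomy \eqref{4.5} forces $\|u\|_\lambda<\widehat D_1$, since the energy is $c^{**}<\frac{m-p}{pm}M(p,q,m)$. If needed, we slightly enlarge $\widehat D_1$ within the gap $(\widehat D_1,\widehat D_2)$; the gap is open and both endpoints come from the polynomial inequality. This gives $u_{\lambda,\mu}^-\in N_{\lambda,\mu}^{(1)}$.

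Finally, for positivity we replace the minimizing sequence by $|u_n|$, which has the same values of $\mathcal{J}_{\lambda,\mu}$, of the Nehari constraint and of the norm. Thus $|u|$ is again a minimizer of $\mathcal{J}_{\lambda,\mu}$ on $N_{\lambda,\mu}^{(1)}$, which is relatively open in $N_{\lambda,\mu}^-$. By Lemma \ref{lem4.3} (equivalently Lemma \ref{lem4.1}), $|u|$ is a critical point, so we may take $u\geqslant0$. The strong maximum principle for the $p$-Laplacian (V\'azquez) applies to $-\Delta_p u+(\lambda V+\mu\phi_u+b^-|u|^{q-p})u^{p-1}\geqslant0$ after local boundedness of the coefficients. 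Arguing as in Theorem \ref{thm3.2}, this yields $u>0$, which completes the proof.
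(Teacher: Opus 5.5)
Up to the positivity step, your argument follows the paper's proof: Ekeland on $N^{(1)}_{\lambda,\mu}$ (Lemma \ref{lem5.2}), compactness from Lemma \ref{lem2.3} with $c_1=\widehat D_1$, then passage to the limit. Your treatment of the strict bounds is more careful than the paper's, which records only $\|u^-_{\lambda,\mu}\|_\lambda\leqslant\widehat D_1$ from \eqref{5.6}.
\begin{itemize}
\item You get $\|u\|_\lambda>\rho_0$ from the limit $K''_{\lambda,u}(1)\leqslant0$ and \eqref{4.1}, using that the Poisson term is strictly positive.
\item You get $\|u\|_\lambda<\widehat D_1$ by applying the dichotomy \eqref{4.5} to the limit itself. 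So the proposed enlargement of $\widehat D_1$ is unnecessary, and it would change the set $N^{(1)}_{\lambda,\mu}$.
\item Your repair of the last step of Lemma \ref{lem2.3}, via $\|v_n\|_\lambda^p(1-C\lambda^{-\kappa}\|v_n\|_\lambda^{m-p})\leqslant o(1)$, is correct.
\end{itemize}
One remark is a non sequitur. $K''_{\lambda,u}(1)\leqslant0$ together with $\|u\|_\lambda>\rho_0$ does not exclude $u\in N^0_{\lambda,\mu}$. This is harmless: once $u\in N^{(1)}_{\lambda,\mu}$, the estimate \eqref{4.7}--\eqref{4.1.7} (Lemma \ref{lem4.3}) gives $K''_{\lambda,u}(1)<0$. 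It still works for $\rho_0\leqslant\|u\|_\lambda\leqslant\widehat D_1$, and this is how the paper argues.

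The step that fails is the strong maximum principle at the end. For $u\geqslant0$ the equation reads
$-\Delta_p u+(\lambda V+\mu\phi_u+a^-u^{m-p}+b^-u^{q-p})u^{p-1}=a^+u^{m-1}+b^+u^{q-1}\geqslant0$.
You dropped the $a^-$ term, which is harmless. The real problem is that $q<p$, so the coefficient $b^-u^{q-p}$ blows up exactly on the zero set of $u$. That is precisely where the maximum principle is needed, so appealing to ``local boundedness of the coefficients'' is circular.

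In V\'azquez's form $-\Delta_p u+\beta(u)\geqslant0$ with $\beta(s)\sim b^-s^{q-1}$, the required condition $\int_{0^+}(s\beta(s))^{-1/p}\,ds=\infty$ fails, because $\int_{0^+}s^{-q/p}\,ds<\infty$. Sublinear absorption is exactly the regime in which dead cores (solutions vanishing on open sets) occur, and $(H_4)$ allows $b<0$ on open sets. In addition, $b$ is only in $L^{q_*}$, so the $C^1$ regularity that V\'azquez's theorem presupposes is not automatic.

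What your argument actually yields is what the paper's yields: a nonnegative nontrivial solution. The paper uses only $\mathcal{J}_{\lambda,\mu}(|u|)=\mathcal{J}_{\lambda,\mu}(u)$ and Lemma \ref{lem4.1} to get this. Upgrading to $u>0$ would require an extra hypothesis, for instance $b\geqslant0$, together with a separate regularity argument.
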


\begin{proof}
	By Lemma \ref{lem5.2}, there exists a bounded sequence $\{u_n\}\subset N_{\lambda,\mu}^{(1)}$ satisfying \eqref{5.2} and
		\[
           \rho_0  <	\|u_n\|_\lambda<\widehat{D}_1.
	\]
Combining this with \eqref{5.1} and Lemma \ref{lem2.3}, we derive that if  $\lambda>0$ is sufficiently large,
 then there exists  $u_{\lambda,\mu}^-\in W_\lambda\setminus\{0\}$ such that 
	$u_n \to u_{\lambda,\mu}^- $ strongly in $ W_\lambda$, and 
	\[
	\mathcal{J}_{\lambda,\mu}(u_{\lambda,\mu}^-) =c^{**} > 0, \qquad
	\mathcal{J}_{\lambda,\mu}'(u_{\lambda,\mu}^-) = 0 .
	\]
	By the weak lower semicontinuity of the norm, one has 
	\begin{equation}
		\|u_{\lambda,\mu}^-\|_\lambda\leqslant\liminf_{n\to\infty}\|u_n\|_\lambda\leqslant\widehat{D}_1. \label{5.6}
	\end{equation}	
Together with the argument of \eqref{4.7}--\eqref{4.1.7}, we derive that there exists a constant $\Pi_0\leqslant \Pi_2$ such that if $0<\mu+|b|_{q_*}<\Pi_0$, then 
	\[
	K_{\lambda,u_{\lambda,\mu}^-}''(1)<0.
	\]
This yields that $u_{\lambda,\mu}^- \in N_{\lambda,\mu}^-$. Moreover,  for $t_\mu^- w_\Omega \in N_{\lambda,\mu}^{(1)}$, one has 
$$c^{**} =	\mathcal{J}_{\lambda,\mu}(u_{\lambda,\mu}^-)\leqslant \mathcal{J}_{\lambda,\mu}(t_\mu^- w_\Omega)< \frac{m-p}{pm} M(p,q,m),$$
which and \eqref{5.6} yield that  $u_{\lambda,\mu}^- \in N_{\lambda,\mu}^{(1)}$.
Furthermore,  we have
$$
	\mathcal{J}_{\lambda,\mu}(|u_{\lambda,\mu}^-|) = \mathcal{J}_{\lambda,\mu}(u_{\lambda,\mu}^-)= c^{**}>0.
$$
Therefore, by Lemma \ref{lem4.1}, we  may suppose that $u_{\lambda,\mu}^-$ is  a positive solution of system \eqref{107}.
\end{proof}

\begin{proof}[\bf Proof of Theorem \ref{thm1}] 
By Theorems \ref{thm3.2} and \ref{thm5.4}, there exists a number $\Pi_0>0$ such that for every $0<\mu+|b|_{q_*}<\Pi_0$, system \eqref{107} has at least two positive solutions $(u_{\lambda,\mu}^\pm,\phi_{u_{\lambda,\mu}^\pm})\in W_\lambda \times D^{1,2}(\mathbb{R}^3)$ satisfying
\[
\|u_{\lambda,\mu}^+\|_\lambda < \left(\frac{(p-q)\mathbf{S}_m^m}{(m-q)\|a\|_\infty}\right)^{\frac{1}{m-p}}< \|u_{\lambda,\mu}^-\|_\lambda, 
\]
and
\[
\mathcal{J}_{\lambda,\mu}(u_{\lambda,\mu}^+) < 0 < \mathcal{J}_{\lambda,\mu}(u_{\lambda,\mu}^-),
\]
when  $\lambda > 0$ is sufficiently large.
\end{proof}



\begin{thebibliography}{30}
	
	\footnotesize






\bibitem{1995BW-CPDE} T. Bartsch, Z. Wang, Existence and multiplicity results for some superlinear elliptic problems on $\mathbb{R}^{N}$.  \emph{Comm. Partial Differential Equations,} \textbf{20} (1995), 1725--1741.

\bibitem{1998BF} V. Benci, D. Fortunato, An eigenvalue problem for the Schr\"{o}dinger-Maxwell equations.  \emph{Topol. Methods Nonlinear Anal.,} \textbf{11} (1998), 283--293.

\bibitem{2002BF} V. Benci, D. Fortunato, Solitary waves of the nonlinear Klein-Gordon equation coupled with the Maxwell equations. \emph{  Rev. Math. Phys.,}  \textbf{14} (2002), 409--420.

\bibitem{1992BM} L. Boccardo, F. Murat, Almost everywhere convergence of the gradients of solutions to elliptic and parabolic equations. \emph{Nonlinear Anal.,} \textbf{19}(1992) 581--597.


\bibitem{2003Br}   K. J. Brown, Y. Zhang,  The Nehari manifold for a semilinear elliptic equation with a sign-changing weight function. \emph{J. Differential Equations},  \textbf{193}(2003) 481--499.



\bibitem{2021-CL-AML} S. Chen, L. Li, Infinitely many large energy solutions for the Schr\"{o}dinger-Poisson system with concave and convex nonlinearities.  \emph{Appl. Math. Lett.,} \textbf{112} (2021), 106789.

\bibitem{2023CT-AML}    X. Chen,  C. Tang, Sign-Changing Solutions for Schr\"{o}dinger-Poisson  System With p-Laplacian in $\mathbb{R}^{3}$.   \emph{Appl. Math. Lett.,} \textbf{139} (2023), 108535.


\bibitem{2004DM-PRSEA} T. D'Aprile, D. Mugnai, Solitary waves for nonlinear Klein-Gordon-Maxwell and Schr\"{o}dinger-Maxwell equations. \emph{Proc. Roy. Soc. Edinburgh Sect. A,} \textbf{134}(2004) 893--906.

\bibitem{2004DM-ANS} T. D'Aprile, D. Mugnai, Non-existence results for the coupled Klein-Gordon-Maxwell equations. \emph{Adv. Nonlinear Stud.,}  \textbf{4}(2004) 307--322.

\bibitem{2002d-ANS} P. d'Avenia, Non-radially symmetric solutions of nonlinear Schr\"{o}dinger equation coupled with Maxwell equations. \emph{Adv. Nonlinear Stud.,}  \textbf{2} (2002), 177--192.

\bibitem{2011DPV} P. d'Avenia, A. Pomponio, G. Vaira, Infinitely many positive solutions for a Schr\"{o}dinger-Poisson system.  \emph{Nonlinear Anal.,} \textbf{74}(2011) 5705--5721.



\bibitem{2022DSW-JMAA}   Y. Du, J. Su, C. Wang, On a quasilinear Schr\"{o}dinger-Poisson system. \emph{J. Math. Anal. Appl.,}  \textbf{505}(2022), 125446, 1--14.

\bibitem{2022DSW-CPAA} Y. Du, J. Su, C. Wang, On the critical Schr\"odinger-Poisson system  with $p$-Laplacian. \emph{Commun. Pure Appl. Anal.,} \textbf{21}(2022) 1329--1342.


\bibitem{1974Ek}   I. Ekeland,  On the variational principle. \emph{J. Math. Anal. Appl.},  \textbf{47}(1974) 324--353.








\bibitem{2024HS-JMP} L. Huang, J. Su, Multiple positive solutions of the quasilinear Schr\"{o}dinger-Poisson system with critical exponent in $D^{1,p}(\mathbb{R}^{3})$. \emph{J. Math. Phys., }  \textbf{65} (2024)  051512.


\bibitem{2011JZ-JDE}  Y.S. Jiang, H.S. Zhou, Schr\"{o}dinger-Poissonsystem with steep potential well.  \emph{J. Differential Equations,}  \textbf{251} (2011) 582--608.


\bibitem{2023KLT-JGA} J.-C. Kang,  X.-Q. Liu, C.-L. Tang, Ground state sign-changing solutions for critical Schr\"{o}dinger-Poisson system with steep potential well.  \emph{J. Geom. Anal.,} \textbf{33}(2023), no.2, Paper No. 59, 24 pp.


\bibitem{2017LS-CMA} C. Lei, H. Suo, Positive solutions for a  Schr\"{o}dinger-Poisson system involving concave-convex nonlinearities.  \emph{Comput. Math. Appl.,}  \textbf{74} (2017) 1516--1524.

\bibitem{LL} E.H. Lieb, M. Loss, \emph{Analysis.}  American Mathematical Society, 2001.

\bibitem{2025LH-BMS}  K. Liu, X. He,  Solutions with prescribed mass for the Sobolev critical Schr\"{o}dinger-Poisson system with p-Laplacian.   \emph{Bull. Math. Sci.,}  \textbf{15} (2025)  2550006.

\bibitem{2025LHR-JDE} K. Liu, X. He,  V. D. R\u{a}dulescu, Solutions with prescribed mass for the p-Laplacian Schr\"{o}dinger-Poisson  system with critical growth. \emph{J. Differential Equations,} \textbf{444} (2025) 113570.



\bibitem{2023PJ-ZAMP}  X. Peng, G. Jia, Existence and concentration behavior of solutions for the logarithmic Schr\"{o}dinger-Poisson system with steep potential.  \emph{Zeitschrift f\"{u}r Angewandte Mathematik und Physik,} \textbf{74} (2023) 29.





\bibitem{2006Ruiz} D. Ruiz, The Schr\"{o}dinger-Poisson equation under the effect of a nonlinear local term. \emph{J. Funct. Anal.,}  \textbf{237} (2006)  655--674.


\bibitem{2018SM-AML} M. Shao, A. Mao, Multiplicity of solutions to Schr\"{o}dinger-Poisson system with concave-convex nonlinearities. \emph{Appl. Math. Lett.,} \textbf{83} (2018) 212--218.

\bibitem{2019SM-JMP} M. Shao, A. Mao, Schr\"{o}dinger-Poisson system with concave-convex nonlinearities, \emph{J. Math. Phys.,} \textbf{60} (2019) 061504.

\bibitem{2020SW-NA} J. Sun, T.-F. Wu,  Steep potential well may help Kirchhoff type equations to generate multiple solutions.  \emph{Nonlinear Anal.,} \textbf{190}(2020)  111609.

\bibitem{2021SW-CCM} J. Sun, T.-F. Wu, On Schr\"{o}dinger-Poisson systems involving concave-convex nonlinearities via a novel constraint approach. \emph{Commun. Contemp. Math.,} \textbf{23} (2021), Paper No. 2050048, 25 pp.

\bibitem{2015SSZ-DCDS} M. Sun, J. Su, L. Zhao, Infinitely many solutions for a Schr\"{o}dinger-Poisson system with concave and convex nonlinearities. \emph{Discrete Contin. Dyn. Syst.,} \textbf{35} (2015)  427--440.






\bibitem{1992Ta}   G. Tarantello,  On nonhomogeneous elliptic equations involving critical Sobolev exponent. \emph{Ann. Inst. H. Poincar\'e Anal. Non Lin\'eaire},  \textbf{9}(1992) 281--304.





\bibitem{2013Willem} M. Willem, \emph{Functional analysis.} Fundamentals and applications. Birkh\"{a}user/Springer, New York, 2013.


\bibitem{2025XNL-ZAMP} D. Xiao, T. V. Nguyen, S. Liang, Normalized solutions for critical Schr\"{o}dinger-Poisson system involving p-Laplacian in $\mathbb{R}^3$. \emph{Z. Angew. Math. Phys.,} \textbf{76} (2025)  1--25.


\bibitem{2009YSD-NA}  M. Yang, Z. Shen, Y. Ding, Multiple semiclassical solutions for the nonlinear Maxwell-Schr\"{o}dinger system.  \emph{Nonlinear Anal.,} \textbf{71} (2009) 730--739.

\bibitem{2021YO-JMAA} Z. Yang, Z. Ou, Nodal solutions for Schr\"{o}dinger-Poisson  systems with concave-convex nonlinearities. \emph{J. Math. Anal. Appl.,}  \textbf{499} (2021) 125006.

\bibitem{2023YT-CAM} C. Yang, C. Tang, Sign-changing solutions for the Schr\"{o}dinger-Poisson  system with concave-convex nonlinearities in $\mathbb{R}^3$. \emph{Commun. Anal. Mech.,} \textbf{15}(2023)  638--657.

\bibitem{2020YTW-AMC}  L. Yin, X. Wu,  C. Tang,  Existence and concentration of ground state solutions for critical  Schr\"{o}dinger-Poisson system with steep potential well. \emph{Appl. Math. Comput.} \textbf{374}  (2020) 125035.




\bibitem{2013ZLZ-JDE}  L. Zhao, H. Liu, F. Zhao, Existence and concentration of solutions for the Schr\"{o}dinger-Poisson equations with steep well potential.  \emph{J. Differential Equations, } \textbf{255}(2013) 1--23.
	
\end{thebibliography}
\end{document}